\newtheorem{Prop}{Proposition}[section]
\newtheorem{Thm}[Prop]{Theorem}
\newtheorem{Lemma}[Prop]{Lemma}
\newtheorem{Cor}[Prop]{Corollary}
\newtheorem{Example}[Prop]{Example}
\newtheorem{Remark}[Prop]{Remark}
\newtheorem{Definition}[Prop]{Definition}
      \def\ud{{\underline{d}}}
\def\bbn{{\mathbb N}}  \def\bbz{{\mathbb Z}}  \def\bbq{{\mathbb Q}} \def\bb1{{\mathbb 1}}
  \def\bbc{{\mathbb C}}
\def\ra{\rightarrow}
\def\hom{\mbox{Hom}}
\def\ext{\mbox{Ext}\,}
\def\udim{\mbox{\underline {dim}}}
\def\uq2{U_q(\hat{sl}_2)}
\def\bb{{\bf b}}
\def\nd{{\noindent}}
\def\mc{{\mathcal{C}}}
\def\md{{\mathcal{D}}}
\def\ue{{\underline{e}}}
\begin{document}

\title[The cluster characters for cyclic quivers] {The cluster character for cyclic quivers}

\thanks{Key words and phrases: cyclic quiver,  cluster
algebra, $\mathbb{Z}$-basis.}
\thanks{Fan Xu was supported by
Alexander von Humboldt Stiftung and was also partially supported by
the Ph.D. Programs Foundation of Ministry of Education of China (No.
200800030058)}

\author{Ming Ding and Fan Xu}
\address{Institute for advanced study\\
Tsinghua University\\
Beijing 100084, P.~R.~China} \email{m-ding04@mails.tsinghua.edu.cn
(M.Ding)}
\address{Department of Mathematical Sciences\\
Tsinghua University\\
Beijing 100084, P.~R.~China} \email{fanxu@mail.tsinghua.edu.cn
(F.Xu)}

\maketitle


\bigskip

\begin{abstract}
We define an analogue of the Caldero-Chapoton map (\cite{CC}) for
the cluster category of finite dimensional nilpotent representations
over a cyclic quiver.  We prove that it is a cluster character (in
the sense of \cite{Palu}) and satisfies some inductive
 formulas for the multiplication between the generalized cluster variables (the images of objects of the cluster category under the map). Moreover, we
construct a $\mathbb{Z}$-basis for the algebras generated by all
generalized cluster variables.
\end{abstract}

\setcounter{tocdepth}{1}
\tableofcontents

\section{Introduction}
Cluster algebras were introduced by S. Fomin and A. Zelevinsky
\cite{FZ} in order to develop a combinatorial approach to study
problems of total positivity in algebraic groups and canonical bases
in quantum groups. The link between cluster algebras and
representation theory of quivers were first revealed in \cite{MRZ}.
In~\cite{BMRRT}, the authors introduced the cluster category of an
acyclic quiver $Q$ (a quiver without oriented cycles) as the
categorification of the corresponding cluster algebras. In order to
show that a cluster category categorifies  the involving cluster
algebra, the Caldero-Chapoton map was defined by P. Caldero and F.
Chapoton in \cite{CC}. Let $\mathcal{C}(Q)$ be the cluster category
associated to an acyclic quiver $Q$ (a quiver without oriented
cycles). The Caldero-Chapoton map of an acyclic quiver $Q$ is a map
$$X_?^Q: \mathrm{obj}(\mc(Q))\ra\bbq(x_1,\cdots,x_n).$$
The map was extensively defined by Y. Palu for a Hom-finite
2-Calabi-Yau triangulated categories with a cluster tilting object
(\cite{Palu}).

As in \cite{Keller}, the cluster category can be defined for any
small hereditary abelian category with finite dimensional Hom- and
Ext-spaces. It is interesting to study the cluster categories
without cluster tilting objects and the involving cluster algebras.
For example, the cluster category of a 1-Calabi-Yau abelian category
contains no cluster tilting objects (even no rigid objects).

In this paper, we will focus on the simplest example of the cluster
category without cluster tilting objects: the cluster category of a
cyclic quiver.  We first define an analogue of the Caldero-Chapoton
map for a cyclic quiver. We prove a multiplication formula analogous
to the cluster multiplication theorem for acyclic cluster algebras
(\cite{CK2005}, \cite{XiaoXu}). As a corollary, the map is a cluster
character in the sense of \cite{Palu}. Let $\widetilde{A}_r $ be the
cyclic quiver with $r$ vertices and $\mc(\widetilde{A}_r)$ be its
cluster category. Let $\mathcal{AH}$ be the subalgebra of
$\bbq(x_1,\cdots, x_r)$ generated by $\{X_M \mid M\in\mathrm{
mod}\bbc \widetilde{A}_r\}$ and $\mathcal{EH}$ be the subalgebra of
$\mathcal{AH}$ generated by $\{X_{M}\mid M\in \mathrm{mod}\bbc
\widetilde{A}_r, \ext^1_{\mc{(\widetilde{A}_n)}}(M,M)=0\
 \}.$ We call the algebra $\mathcal{EH}$ the cluster algebra of $\widetilde{A}_r$.
We show that $\mathcal{AH}$ coincides with $\mathcal{EH}$ and
construct a $\bbz$-basis of $\mathcal{EH}$.

\section{The cluster character for cyclic quivers}
Let $k=\bbc$ be the  field of complex number. Throughout the rest
part of this paper, we fix a cyclic quiver $Q=\widetilde{A}_r,$ i.e,
a quiver with oriented cycles, where $Q_0=\{1,2,\cdots, r\}$:
$$
\xymatrix@R=0.8pc{& & r\ar[dr]& &\\
&1\ar[ur]&\cdots\ar[l]& r-1\ar[l]&}
$$
We denote by mod$kQ$ the category of finite-dimensional nilpotent
representations of $kQ$. Let $\tau$ be the Auslander-Reiten
translation functor.  Let $E_1,\cdots,E_r$ be simple modules of the
vertices $1, \cdots, r$, respectively. Set $\udim E_i=s_i$ for $i=1,
\cdots, r.$ We have $\tau E_{2}=E_{1}, \cdots, \tau E_1=E_r$ The
Auslander-Reiten quiver of $kQ$ is a tube of rank $r$ with $E_1,
\cdots, E_r$ lying the mouth of the tube. For $1\leq i\leq r$, we
denote by $E_i=E_{i+r}$, and by $E_i[n]$ the unique nilpotent
indecomposable representation with socle $E_i$ and length $n$. Set
$E_i[0]=0$ for $i=1, \cdots, r.$ We note that that any
indecomposable $kQ$-module is of the form $E_i[j]$ for $i=1, \dots,
r$ and $j\in \bbn\sqcup \{0\}$. Let $\textbf x=\{x_i|i\in Q_0\}$ is
a family indeterminates over $\mathbb{Z}$ and set $x_i=x_{i+mr}$ for
$1\leq i\leq r, m\in \mathbb{Z}_{\geq 0}$. Here we denote by
$\mathbb{Z}_{\geq 0}=\mathbb{Z}\sqcup \{0\}.$

By definition, the cluster category $\mc=\mc(Q)$ is the orbit
category $\md^b(\mathrm{mod} kQ)/\tau\circ [-1]$. It is a
triangulated category by \cite[Theorem 9.9]{Keller}. Different from
the cluster category of an acyclic quiver, the set of objects in
$\mc$ coincides with the set of objects in $\mathrm{mod}kQ$. Also,
for any two indecomposable objects $M, N\in \mc$, we have
$$
\mathrm{Ext}^1_{\mc}(M, N)= \mathrm{Ext}^1_{kQ}(M, N)\oplus
\mathrm{Ext}^1_{kQ}(N, M).
$$
It is possible that both of two terms in the right side don't
vanish. We denote by $\langle -, -\rangle$ the Euler form on
$\mathrm{mod}kQ$, i.e., for any $M, N$ in $\mathrm{mod}kQ$,
$$\langle \udim M, \udim N\rangle:=\mathrm{dim}_{k}\mathrm{Hom}_{kQ}(M, N)-\mathrm{dim}_k\mathrm{Ext}^1_{kQ}(M,
N).
$$ It is well defined. Thus according to the Caldero-Chapoton map, we can similarly
define a map on $\mathrm{mod}kQ$
$$X_?: \mathrm{obj}(\mathrm{mod}kQ)\longrightarrow \mathbb{Z}[\mathbf{x}^{\pm 1}]$$ by
mapping $M$ to
                    $$
                        X_M = \sum_{\underline{e}} \chi(\mathrm{Gr}_{\ue}(M)) \prod_{i \in Q_0} x_i^{-\left<\ue, s_i\right>-\left <s_i, \underline{\mathrm{dim}}M - \ue\right
                        >}
                    $$ where $Gr_{\ue}(M)$ is
the $\ue$-Grassmannian of $M,$ i.e., the variety of
finite-dimensional nilpotent submodules of $M$ with dimension vector
$\ue,$ and set $X_0=1.$ Here, we need not assume that $M$ is
indecomposable by the following Proposition \ref{DM}. The fraction
$X_M$ is called a generalized cluster variable.
\begin{Prop}\label{E[n]}
With the above notation, we have
$$
X_{E_l[n]}=\frac{x_{l+n}}{x_l}+\sum_{k=1}^{n-1}\frac{x_{l+n}x_{l+r-1}}{x_{l+k-1}x_{l+k}}+\frac{x_{l+r-1}}{x_{l+n-1}}.$$
for $n\in \bbn$ and $l=1, \cdots, r.$
\end{Prop}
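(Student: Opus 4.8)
The plan is to compute $X_{E_l[n]}$ directly from the definition by understanding the submodule structure of the indecomposable module $M = E_l[n]$. This module is uniserial with socle $E_l$ and length $n$, so its composition factors are $E_l, E_{l+1}, \dots, E_{l+n-1}$ reading from the socle upward, and its dimension vector is $\udim M = \sum_{k=0}^{n-1} s_{l+k}$. First I would observe that, because $M$ is uniserial, its submodules form a chain: there is exactly one submodule of each length $0,1,\dots,n$. The unique submodule of length $j$ is $E_l[j]$, with dimension vector $\ue_j = \sum_{k=0}^{j-1} s_{l+k}$. Consequently each Grassmannian $\mathrm{Gr}_{\ue_j}(M)$ is a single point whenever $\ue$ equals one of these $\ue_j$ and is empty otherwise, so $\chi(\mathrm{Gr}_{\ue}(M)) = 1$ exactly for the $n+1$ values $\ue = \ue_0, \dots, \ue_n$ and vanishes elsewhere. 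This collapses the sum in the definition of $X_M$ to a sum of $n+1$ monomial terms indexed by $j = 0, 1, \dots, n$.

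**Evaluating the exponents.**

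Next I would compute, for each $j$, the exponent $-\langle \ue_j, s_i\rangle - \langle s_i, \udim M - \ue_j\rangle$ of $x_i$. The key input is the Euler form on $\mathrm{mod}\,kQ$ for the cyclic quiver: on simples one has $\langle s_a, s_b\rangle = \delta_{a,b} - \delta_{b,a+1}$ (indices mod $r$), reflecting that the only arrows go $a \to a+1$ and there are no higher extensions in this nilpotent setting. Using bilinearity, the total exponent of $x_i$ in the $j$-th term is $-\langle \ue_j, s_i\rangle - \langle s_i, \udim M - \ue_j\rangle$, and I would assemble the product $\prod_{i} x_i^{\,\cdot}$ into a single Laurent monomial. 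The telescoping structure of the Euler-form pairing against the chain $\ue_0 \subset \ue_1 \subset \cdots \subset \ue_n$ should make each monomial reduce to a ratio of the form $x_{l+n}x_{l+r-1}/(x_{l+k-1}x_{l+k})$ for the interior terms, with the $j=0$ and $j=n$ terms degenerating to $x_{l+n}/x_l$ and $x_{l+r-1}/x_{l+n-1}$ respectively; here I would use the conventions $x_i = x_{i+mr}$ to rewrite indices into the stated range.

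**Bookkeeping and the main obstacle.**

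Matching the general $j$-th monomial to the summand $\frac{x_{l+n}x_{l+r-1}}{x_{l+k-1}x_{l+k}}$ requires a careful reindexing (setting $k = j$ for $1 \le j \le n-1$) and consistent use of the cyclic identification $x_i = x_{i+r}$; I expect the main obstacle to be precisely this bookkeeping of indices modulo $r$, since the Euler form is not symmetric and the exponents involve both $\langle \ue_j, s_i\rangle$ and $\langle s_i, \udim M - \ue_j\rangle$, so the two halves contribute to neighboring vertices in opposite ways and one must track the boundary terms at $i = l$ and $i = l+n-1$ against the wrap-around at $i = l+r-1 \equiv l-1$. Once the general term is shown to equal the claimed ratio and the two extreme terms are identified separately, collecting the $n+1$ contributions yields exactly the stated closed form, completing the proof.
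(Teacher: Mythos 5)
Your proposal follows essentially the same route as the paper: both reduce $X_{E_l[n]}$ to a sum of $n+1$ monomials using that the uniserial module $E_l[n]$ has exactly one submodule $E_l[j]$ of each length $j=0,\dots,n$ (so every nonempty Grassmannian is a point), and then evaluate the Euler-form exponents separately for the boundary cases $j=0$, $j=n$ and the interior cases $0<j<n$. The only difference is cosmetic: you phrase the exponent computation via the explicit pairing of simples while the paper phrases it via $\dim\mathrm{Hom}$ and $\dim\mathrm{Ext}^1$, and both treatments leave the final index bookkeeping as a routine verification.
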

\begin{proof}
It is known that all submodules of $E_l[n]$ are $E_l[0], E_l[1],
\cdots, E_l[n]$. Set $\underline{d}_{i,j}=\udim E_i[j].$ By
definition,
$$
X_{E_l[n]}=\sum_{k=0}^n\prod_{i\in
Q_0}x_i^{-\left<\underline{d}_{l,k},
\underline{d}_{i,1}\right>-\left <\underline{d}_{i,1},
\underline{d}_{l+k, n-k}\right
                        >}.
$$
By the definition of the Euler form, we have
$$
-\left<\underline{d}_{l,k}, \underline{d}_{i,1}\right>-\left
<\underline{d}_{i,1}, \underline{d}_{l+k,
n-k}\right>=-\mathrm{dim}_{k}\mathrm{Hom}(E_l[k],
E_i)+\mathrm{dim}_{k}\mathrm{Ext}^1(E_l[k], E_i)
$$
$$
-\mathrm{dim}_{k}\mathrm{Hom}(E_i,
E_{l+k}[n-k])+\mathrm{dim}_{k}\mathrm{Ext}^1(E_i, E_{l+k}[n-k]).
$$
If $k=0$, then $\prod_{i\in Q_0}x_i^{-\left<\underline{d}_{l,k},
\underline{d}_{i,1}\right>-\left <\underline{d}_{i,1},
\underline{d}_{l+k, n-k}\right
                        >}=\frac{x_{l+n}}{x_{l}}.$

\nd If $k=n,$ then $\prod_{i\in Q_0}x_i^{-\left<\underline{d}_{l,k},
\underline{d}_{i,1}\right>-\left <\underline{d}_{i,1},
\underline{d}_{l+k, n-k}\right
                        >}=\frac{x_{l+r-1}}{x_{l+n-1}}.$

\nd If $0<k<n,$ then $\prod_{i\in
Q_0}x_i^{-\left<\underline{d}_{l,k},
\underline{d}_{i,1}\right>-\left <\underline{d}_{i,1},
\underline{d}_{l+k, n-k}\right
                        >}=\frac{x_{l+n}x_{l+r-1}}{x_{l+k-1}x_{l+k}}.$
\end{proof}
\begin{Prop}\label{DM}
(1) For $M,N$ in $\mathrm{mod}kQ$, we have
$$
X_{M}X_{N}=X_{M\oplus N}.
$$
(2) Let $0\longrightarrow \tau M\longrightarrow B\longrightarrow
M\longrightarrow 0$ be an almost split sequence in $\mathrm{mod}kQ$,
then
$$
X_{M}X_{\tau M}=X_{B}+1.
$$
\end{Prop}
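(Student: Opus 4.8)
The plan is to prove the two parts by rather different means: part (1) geometrically, via a torus action, and part (2) combinatorially, from the closed formula of Proposition \ref{E[n]}.

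For part (1), I would exploit a $\bbc^*$-action. Let $\bbc^*$ act on $M\oplus N$ by $t\cdot(m,n)=(m,tn)$; this is a module automorphism, so it acts on each quiver Grassmannian $\mathrm{Gr}_{\ue}(M\oplus N)$. A submodule is fixed exactly when it is homogeneous for the decomposition, i.e. of the form $U\oplus V$ with $U\sset M$, $V\sset N$ submodules, so the fixed-point locus is $\bigsqcup_{\underline f+\underline g=\ue}\mathrm{Gr}_{\underline f}(M)\times\mathrm{Gr}_{\underline g}(N)$. Since the Euler characteristic of a complex variety carrying a $\bbc^*$-action equals that of its fixed-point set, and $\chi$ is multiplicative on products and additive on disjoint unions, I get $\chi(\mathrm{Gr}_{\ue}(M\oplus N))=\sum_{\underline f+\underline g=\ue}\chi(\mathrm{Gr}_{\underline f}(M))\,\chi(\mathrm{Gr}_{\underline g}(N))$. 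Because $\langle-,-\rangle$ is bilinear and $\udim(M\oplus N)=\udim M+\udim N$, the monomial exponent attached to $\ue=\underline f+\underline g$ factors as the product of those attached to $\underline f$ and $\underline g$; multiplying the defining sums for $X_M$ and $X_N$ and collecting by $\ue$ then yields $X_{M\oplus N}$.

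For part (2), the end terms of an almost split sequence are indecomposable, so I may assume $M=E_l[n]$, whence $\tau M=E_{l-1}[n]$ and the sequence reads $0\to E_{l-1}[n]\to E_{l-1}[n+1]\oplus E_l[n-1]\to E_l[n]\to 0$ (with $E_l[0]=0$ when $n=1$). By part (1), $X_B=X_{E_{l-1}[n+1]}X_{E_l[n-1]}$, so the claim reduces to the identity
$$X_{E_l[n]}\,X_{E_{l-1}[n]}=X_{E_{l-1}[n+1]}\,X_{E_l[n-1]}+1.$$
Writing $g_l(n):=X_{E_l[n]}$ and $g_l(0):=1$, the key step is to extract from Proposition \ref{E[n]} the first-order recursion
$$g_l(n+1)=\frac{x_{l+n+1}}{x_{l+n}}\,g_l(n)+\frac{x_{l-1}}{x_{l+n}},$$
obtained by splitting off the last term of the sum $\sum_{k=1}^{n}$ and re-expressing the remainder through $g_l(n)$. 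Substituting this recursion for $g_l(n)$ and for $g_{l-1}(n+1)$ into the displayed identity, the cross terms $\frac{x_{l+n}}{x_{l+n-1}}g_l(n-1)g_{l-1}(n)$ cancel, and the whole statement collapses to the auxiliary linear identity
$$x_{l-1}\,g_{l-1}(n)-x_{l-2}\,g_l(n-1)=x_{l+n-1}.$$
This last identity I would prove by an immediate induction on $n$: the same recursion rewrites its left-hand side at level $n$ as $\tfrac{x_{l+n-1}}{x_{l+n-2}}$ times its left-hand side at level $n-1$ (the constant terms cancel), and the base case $n=1$ is a one-line check.

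The main obstacle — and the real content of part (2) — is discovering the recursion for $g_l(n)$; once it is available, both the auxiliary identity and the multiplication formula become one-line inductions and all the apparent complexity of the sums in Proposition \ref{E[n]} evaporates. In part (1) the only delicate point is invoking the $\bbc^*$-localization principle for Euler characteristics cleanly, for which I would simply cite the standard result on torus actions.
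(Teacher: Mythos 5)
Your proof is correct and follows essentially the same route as the paper: part (1) is proved there too by a $\bbc^*$-action on $\mathrm{Gr}_{\ue}(M\oplus N)$ (the paper uses weights $(1,2)$ instead of your $(0,1)$, with the same fixed locus $\bigsqcup_{\underline{f}+\underline{g}=\ue}\mathrm{Gr}_{\underline{f}}(M)\times\mathrm{Gr}_{\underline{g}}(N)$), and part (2) is reduced there, exactly as you do, to the single identity $X_{E_1[n]}X_{E_2[n]}=X_{E_1[n+1]}X_{E_2[n-1]}+1$, which the paper then dispatches as ``direct confirmation by using Proposition \ref{E[n]}.'' Your recursion $g_l(n+1)=\frac{x_{l+n+1}}{x_{l+n}}g_l(n)+\frac{x_{l-1}}{x_{l+n}}$ and the auxiliary identity $x_{l-1}g_{l-1}(n)-x_{l-2}g_l(n-1)=x_{l+n-1}$ both check out against the closed formula (using the periodicity $x_{l+r-1}=x_{l-1}$), so you have in fact supplied the verification that the paper omits.
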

\begin{proof}
The proof is similar to \cite[Proposition 3.6]{CC}. For (1), by
definition, it is enough to prove that for any dimension vector
$\underline{e}$, we have
$$\chi(Gr_{\underline{e}}(M\oplus N))=\sum_{\underline{f}+\underline{g}=\underline{e}}\chi(Gr_{\underline{f}}(M))\chi(Gr_{\underline{g}}(N)).$$
Consider the natural morphism of varieties
$$
f:
\prod_{\underline{f}+\underline{g}=\underline{e}}Gr_{\underline{f}}(M)\times
Gr_{\underline{g}}(N)\rightarrow Gr_{\underline{e}}(M\oplus N)
$$
defined by sending $(M_1, N_1)$ to $M_1\oplus N_1$. Since $f$ is
monomorphism, we have
$$
\sum_{\underline{f}+\underline{g}=\underline{e}}\chi(Gr_{\underline{f}}(M))\chi(Gr_{\underline{g}}(N))=\chi(\mathrm{Im}f).
$$
On the other hand, we define an action of $\bbc^*$ on
$Gr_{\underline{e}}(M\oplus N)$ by  $$t. (m, n)=(tm, t^2n)$$ for
$t\in \bbc^*$ and $m\in M, n\in N$. The set of stable points is just
$\mathrm{Im}f.$ Hence,
$\chi(\mathrm{Im}f)=\chi(Gr_{\underline{e}}(M\oplus N)).$
This proves (1).\\
(2)\ Assume that $M=E_i[n]$. It is enough to prove:
$$X_{E_{1}[n]}X_{E_{2}[n]}=X_{E_{1}[n+1]}X_{E_{2}[n-1]}+1$$where $0\longrightarrow E_{1}[n]\longrightarrow E_{2}[n-1]\oplus E_{1}[n+1]\longrightarrow
E_{2}[n]\longrightarrow 0$ is an almost split sequence in
$\mathrm{mod}kQ$. The equation in (2) follows from the direct
confirmation by using Proposition \ref{E[n]}.
\end{proof}

Let $M, N$ be indecomposable  $kQ-$modules satisfying that
$\mathrm{dim}_{k}\mathrm{Ext}^1_{kQ}(M,
N)=\mathrm{dim}_{k}\mathrm{Hom}_{kQ}(N, \tau M)=1$. Assume that
$M=E_i[j]$, $N=E_k[l]$. Then in $\mc(Q)$, there are just two
involving triangles
$$
E_{k}[l]\rightarrow E\rightarrow E_{i}[j]\rightarrow \tau E_{k}[l]
$$
and
$$
E_{i}[j]\rightarrow E'\rightarrow E_{k}[l]\xrightarrow{g} \tau
E_{i}[j]
$$
where $E\cong E_{k}[i+j-k]\oplus E_{i}[k+l-i]$ and $E'\cong
\mathrm{ker}g\oplus \tau^{-1}\mathrm{coker}g.$
\begin{Thm}\label{exp}
 With the above notation, we have
$$
X_MX_N=X_{E}+X_{E'}.
$$
\end{Thm}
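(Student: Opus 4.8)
The first step is to reduce everything to the explicit rational functions of Proposition~\ref{E[n]}. By Proposition~\ref{DM}(1) we have $X_MX_N=X_{M\oplus N}$, and the first triangle is induced by an honest short exact sequence $0\to N\to E\to M\to 0$ in $\mathrm{mod}\,kQ$, so that $\udim E=\udim M+\udim N=\udim(M\oplus N)$. Hence $X_{M\oplus N}$ and $X_E$ are supported on exactly the same Laurent monomials $x^{m(\underline g)}$, with $m(\underline g)_i=-\langle \underline g,s_i\rangle-\langle s_i,\udim E-\underline g\rangle$, and the theorem becomes equivalent to
$$X_{M\oplus N}-X_E=\sum_{\underline g}\big(\chi(\mathrm{Gr}_{\underline g}(M\oplus N))-\chi(\mathrm{Gr}_{\underline g}(E))\big)\,x^{m(\underline g)}=X_{E'}.$$
The content is thus that this difference of Grassmannian Euler characteristics reassembles into a single $X_{E'}$.

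Next I would make $E'$ fully explicit. Since all submodules of $E_k[l]$ form a chain, the map $g\colon E_k[l]\to\tau E_i[j]=E_{i-1}[j]$ (unique up to scalar, as $\hom_{kQ}(N,\tau M)=1$) has $\ker g=E_k[a]$ and image $E_{i-1}[l-a]$, where $a$ is the unique representative of $i-1-k$ modulo $r$ with $\max(0,l-j)\le a\le l$. Therefore $\mathrm{coker}\,g=E_{i-1+l-a}[\,j-l+a\,]$, $\tau^{-1}\mathrm{coker}\,g=E_{i+l-a}[\,j-l+a\,]$, and $E'\cong E_k[a]\oplus E_{i+l-a}[\,j-l+a\,]$, with the convention $E_\bullet[0]=0$, $X_0=1$ covering the degenerate cases (in the almost split situation of Proposition~\ref{DM}(2) one checks $g$ is an isomorphism, $E'=0$, and $X_{E'}=1$, recovering that result). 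Applying Proposition~\ref{DM}(1) once more gives $X_E=X_{E_k[i+j-k]}X_{E_i[k+l-i]}$ and $X_{E'}=X_{E_k[a]}X_{E_{i+l-a}[j-l+a]}$, so every term of the desired identity is now a product of the closed forms of Proposition~\ref{E[n]}.

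It then remains to verify the resulting Laurent identity. Each factor $X_{E_\bullet[\bullet]}$ is a sum of a leading term, a trailing term, and an interval of middle terms $x_\bullet x_\bullet/(x_\bullet x_\bullet)$; multiplying the two factors of $X_MX_N$ produces a double sum, which I would regroup so that the same telescoping cancellations that collapse the almost split case of Proposition~\ref{DM}(2) to $+1$ split it into the block equal to $X_E$ and the block equal to $X_{E'}$. The delicate point, and what I expect to be the main obstacle, is that the monomials carried by $X_{E'}$ belong to a different dimension vector than those of $X_E$, so the regrouping is not term-by-term: one must identify, after the substitution $x_t=x_{t+r}$, exactly which cross terms of $X_{E_i[j]}X_{E_k[l]}$ reassemble into the shifted monomials of $X_{E'}$, while simultaneously tracking the cyclic (mod $r$) index arithmetic and the boundary and degenerate ranges of the lengths $i+j-k$, $k+l-i$, $a$, and $j-l+a$. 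Once this correspondence of terms is pinned down, the remaining manipulation is routine telescoping.

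A more conceptual alternative, in the spirit of \cite{CK2005} and \cite{XiaoXu}, is to compute the Euler-characteristic difference of the first paragraph directly. The $\bbc^{*}$-localisation already used for Proposition~\ref{DM}(1) evaluates $\chi(\mathrm{Gr}_{\underline g}(M\oplus N))$; submodules $W\subseteq E$ are controlled through $W\mapsto(W\cap N,\,(W+N)/N)$ via the sequence $0\to N\to E\to M\to 0$; and the defect between the two counts is governed by the connecting map $g$ of the second triangle, producing $X_{E'}$. This route avoids the closed-form bookkeeping, but the heart of the matter, namely identifying the defect with $\chi(\mathrm{Gr}(E'))$ and matching the monomials coming from $\tau^{-1}\mathrm{coker}\,g$, is exactly the same obstacle as above.
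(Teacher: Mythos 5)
You have correctly reformulated the problem and even located its crux, but in both routes you sketch you stop precisely at that crux and declare it "the main obstacle" without resolving it; a plan that names the hard step is not a proof of it. The paper's argument (your "conceptual alternative") runs as follows, and the piece you are missing is its final dichotomy. Since every Grassmannian $\mathrm{Gr}_{\underline{e}}(L)$ here is a point or empty, $X_MX_N$ is a sum of monomials indexed by pairs of submodules $(M_1,N_1)$ of $(M,N)$. A pair $(M_1,N_1)$ lifts to a submodule of $E$ (contributing to $X_E$) if and only if $p_0(\mathrm{Ker}\,\beta')\neq 0$, where $\beta'\colon \mathrm{Ext}^1_{kQ}(M,N)\oplus\mathrm{Ext}^1_{kQ}(M_1,N_1)\to\mathrm{Ext}^1_{kQ}(M_1,N)$ is the difference of the two restriction maps and $p_0$ the projection to the first summand. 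On the other side, using the four-term exact sequence $0\to K\to N\to\tau M\to C\to 0$, a pair $(M_1,N_1)$ arises from a pair of submodules of $K\oplus\tau^{-1}C$ (contributing to $X_{E'}$, after the Euler-form computation with $\underline{d}^{*}=\underline{\mathrm{dim}}M-\underline{\mathrm{dim}}\tau^{-1}C$ that realigns the monomials — the bookkeeping you flag but do not carry out) if and only if $\mathrm{Hom}_{kQ}(N/N_1,\tau M_1)\neq 0$. The decisive step is then the duality
$$
\mathrm{dim}_k\,p_0(\mathrm{Ker}\,\beta')+\mathrm{dim}_k\,\mathrm{Hom}_{kQ}(N/N_1,\tau M_1)=\mathrm{dim}_k\,\mathrm{Ext}^1_{kQ}(M,N)=1,
$$
obtained by identifying $(p_0(\mathrm{Ker}\,\beta'))^{\perp}$ with $\mathrm{Hom}_{kQ}(N/N_1,\tau M_1)$ inside the one-dimensional space $\mathrm{Hom}_{kQ}(N,\tau M)$. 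This forces exactly one of the two conditions to hold for every pair $(M_1,N_1)$, so the monomials of $X_MX_N$ split cleanly between $X_E$ and $X_{E'}$ with no overlap and no remainder. Without this (or an equivalent explicit term-by-term matching, which your first route defers to "routine telescoping" that is in fact the entire content), the identity is asserted rather than proved. Note also that the hypothesis $\mathrm{dim}_k\mathrm{Ext}^1_{kQ}(M,N)=\mathrm{dim}_k\mathrm{Hom}_{kQ}(N,\tau M)=1$ enters exactly here; your write-up never uses it in an essential way, which is a reliable sign that the argument is incomplete.
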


\begin{proof}
Assume that $$X_M = \sum_{\underline{e}} \chi(\mathrm{Gr}_{\ue}(M))
\prod_{i \in Q_0} x_i^{-\left<\ue, s_i\right>-\left <s_i,
\underline{\mathrm{dim}}M - \ue\right
                        >}$$ and
                        $$
X_N = \sum_{\underline{e}'} \chi(\mathrm{Gr}_{\ue'}(N)) \prod_{i \in
Q_0} x_i^{-\left<\ue', s_i\right>-\left <s_i,
\underline{\mathrm{dim}}N - \ue'\right
                        >}
                        $$
Then
$$
X_MX_N=\sum_{\ue, \ue'}\prod_{i\in Q_0}x_i^{-\left<\ue+\ue',
s_i\right>-\left <s_i, \underline{\mathrm{dim}}M
+\underline{\mathrm{dim}N}- (\ue+\ue')\right>}.
$$
Note that $\chi(\mathrm{Gr}_{\ue}(L))=1$ or $0$ for any $kQ$-module
$L.$ Since $\mathrm{Ext}^1_{kQ}(M, N)\neq 0$, we have a short exact
sequence
$$
0\rightarrow E_{k}[l]\xrightarrow{f_1} E\xrightarrow{f_2}
E_{i}[j]\rightarrow 0.
$$
Define a morphism of varieties
$$
\phi: Gr_{\ud}(E)\rightarrow
\bigsqcup_{\ue+\ue'=\ud}Gr_{\ue}(M)\times Gr_{\ue'}(N)
$$
by sending $(E_1)$ to $(f_1^{-1}(E_1), f_2(E_1)).$ For $(M_1,
N_1)\in Gr_{\ue}(M)\times Gr_{\ue'}(N)$, we consider the natural
map:
$$ \beta': \ext^{1}_{kQ}(M,N)\oplus \ext^{1}_{kQ}(M_1,N_1)\rightarrow
\ext^{1}_{kQ}(M_1,N) $$  sending $(\varepsilon,\varepsilon')$ to
$\varepsilon_{M_1}-\varepsilon'_{N}$ where $\varepsilon_{M_1}$ and
$\varepsilon'_{N}$ are induced by including $M_1\subseteq M$ and
$N_1\subseteq N,$ respectively and the projection
$$
p_0: \ext^{1}_{kQ}(M,N)\oplus \ext_{kQ}^{1}(M_1,N_1)\rightarrow
\ext^{1}_{kQ}(M,N).
$$
It is easy to check that $(M_1, N_1)\in \mathrm{Im}\phi$ if and only
if $p_0(\mathrm{ker}\beta')\neq 0.$ Hence, we have
$$
X_E=\sum_{\ue, \ue'; p_0(\mathrm{ker}\beta')\neq 0}\prod_{i\in
Q_0}x_i^{-\left<\ue+\ue', s_i\right>-\left <s_i,
\underline{\mathrm{dim}}M +\underline{\mathrm{dim}N} -
(\ue+\ue')\right>}.
$$

Assume that $$X_{E'} = \sum_{\underline{d}'_1, \ud'_2}
\chi(\mathrm{Gr}_{\ud'_1}(K))
\chi(\mathrm{Gr}_{\ud'_2}(\tau^{-1}C))\prod_{i \in Q_0}
x_i^{-\left<\ud'_1+\ud'_2, s_i\right>-\left <s_i,
\underline{\mathrm{dim}}K+ \underline{\mathrm{dim}}\tau^{-1}C-
\ud'_1-\ud'_2\right
                        >} $$
Set
$\ud^*=\underline{\mathrm{dim}}M-\underline{\mathrm{dim}}\tau^{-1}C.$
We have
\begin{eqnarray}
    && \left <s_i,
\underline{\mathrm{dim}}K+ \underline{\mathrm{dim}}\tau^{-1}C\right
                        >  \nonumber\\
   &=& \left <s_i,
\underline{\mathrm{dim}}N-\tau(\ud^*)+
\underline{\mathrm{dim}}M-\ud^*\right
                        >\nonumber\\
  &=& \left <s_i,
\underline{\mathrm{dim}}M+\underline{\mathrm{dim}}N-\ud^*\right
                        >+\left <\ud^*, s_i \right >. \nonumber
\end{eqnarray}
Hence, $X_{E'}$ can be reformulated as
$$
\sum_{\underline{d}'_1, \ud'_2} \prod_{i \in Q_0}
x_i^{-\left<\ud'_1+\ud'_2+\ud^*, s_i\right>-\left <s_i,
\underline{\mathrm{dim}}M+ \underline{\mathrm{dim}}N-
(\ud'_1+\ud'_2+\ud^*)\right
                        >}
$$

Since $\mathrm{dim}_{k}\mathrm{Hom}_{kQ}(N, \tau M)=1$, there is
only one element in $\mathbb{P}\mathrm{Hom}_{kQ}(N, \tau M)$ with
the representative $g$. We have a long exact sequence
$$
\xymatrix{0\ar[r]&K\ar[r]&N\ar[r]^g&\tau M\ar[r]&C\ar[r]&0}
$$
 Given submodules $K_1, C_1$ of $K, C$, respectively,
we have the commutative diagram
$$
\xymatrix{ 0\ar[r]& K\ar[r]\ar[d]& N\ar[r]^{g}\ar[d]& \tau M\ar[r]&
C\ar[r]& 0\\
0\ar[r]& K/K_1\ar[r]& N/K_1\ar[r]^-{g'}& \tau M_1\ar[r]\ar[u] &
C_1\ar[r]\ar[u]& 0}
$$
where $\tau M_1$ is the corresponding pullback. Define a morphism of
varieties
$$\phi': \bigsqcup_{ \ud_1'+\ud_2'+\ud^*=\ud'}Gr_{\ud'_1}(K)\times Gr_{\ud'_2}(\tau^{-1}C)\rightarrow
\bigsqcup_{\ue+\ue'=\ud'}Gr_{\ue}(M)\times Gr_{\ue'}(N) $$ by
sending $(K_1, \tau^{-1}(C_1))$ to $(K_1, M_1)$. Checking the above
diagram, we know that $(M_1, N_1)\in \mathrm{Im}\phi'$ if and only
if $\mathrm{Hom}_{kQ}(N/N_1, \tau M_1)\neq 0.$ Therefore, we obtain
$$
X_{E'}=\sum_{\ue, \ue'; \mathrm{Hom}_{kQ}(N/N_1, \tau M_1)\neq
0}\prod_{i\in Q_0}x_i^{-\left<\ue+\ue', s_i\right>-\left <s_i,
\underline{\mathrm{dim}}M +\underline{\mathrm{dim}N} -
(\ue+\ue')\right>}.
$$

Consider the dual of $\beta'$:
$$
\beta: \mathrm{Hom}_{kQ}(N, \tau M_1)\rightarrow
\mathrm{Hom}_{kQ}(N, \tau M)\oplus \mathrm{Hom}_{kQ}(N_1, \tau M_1).
$$
Then $$ (p_0(\mathrm{ker}\beta'))^{\perp}= \mathrm{Im}\beta\bigcap
\hom(N,\tau M)\simeq \hom(N/N_1,\tau M_1).
$$ We obtain that
$$
\mathrm{dim}_{k}(p_0(\mathrm{ker}\beta'))+\mathrm{dim}_{k}\mathrm{Hom}(N/N_1,
\tau M_1)=\mathrm{dim}_{k}\mathrm{Ext}^1_{kQ}(M, N)=1
$$
Hence, any $(M_1, N_1)$ belongs to either $\mathrm{Im}\phi$ or
$\mathrm{Im}\phi'$ for some $\ud$ or $\ud'$. We complete the proof.
\end{proof}

Following the definition of a cluster character in \cite{Palu}, we
can easily  check the following corollary.
\begin{Cor}
The Caldero-Chapoton map for a cyclic quiver is a cluster character.
\end{Cor}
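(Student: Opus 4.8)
The plan is to verify directly the three defining axioms of a cluster character in the sense of \cite{Palu}: (i) $X_L$ depends only on the isomorphism class of $L$; (ii) $X_{L\oplus L'}=X_L X_{L'}$; and (iii) whenever $\dim_k\mathrm{Ext}^1_{\mc}(L,L')=1$, one has $X_L X_{L'}=X_B+X_{B'}$, where $L'\to B\to L\to L'[1]$ and $L\to B'\to L'\to L[1]$ are the two non-split triangles in $\mc$. Axiom (i) is immediate, since the Euler form and the Euler characteristics $\chi(\mathrm{Gr}_{\ue}(L))$ entering the definition of $X_L$ are invariants of the isomorphism class of $L$. Axiom (ii) is precisely Proposition \ref{DM}(1).

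For axiom (iii), the first step is to translate the hypothesis $\dim_k\mathrm{Ext}^1_{\mc}(L,L')=1$ into the module-theoretic condition required by Theorem \ref{exp}. Using the decomposition
$$\mathrm{Ext}^1_{\mc}(L,L')=\mathrm{Ext}^1_{kQ}(L,L')\oplus\mathrm{Ext}^1_{kQ}(L',L)$$
recalled in the text, one-dimensionality forces exactly one summand to be one-dimensional and the other to vanish; say $\dim_k\mathrm{Ext}^1_{kQ}(L,L')=1$ and $\mathrm{Ext}^1_{kQ}(L',L)=0$. By the Auslander-Reiten (Serre duality) formula in the hereditary category $\mathrm{mod}\,kQ$ we then obtain $\dim_k\mathrm{Hom}_{kQ}(L',\tau L)=\dim_k\mathrm{Ext}^1_{kQ}(L,L')=1$, which is exactly the standing hypothesis of Theorem \ref{exp} with $M=L$ and $N=L'$. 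A preliminary reduction, using the multiplicativity (ii) to split off direct summands not contributing to the extension, lets us assume $L$ and $L'$ indecomposable, so that $L=E_i[j]$ and $L'=E_k[l]$ as in the set-up of Theorem \ref{exp}.

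The second step is to match the two triangles demanded by axiom (iii) with the two triangles
$$E_k[l]\to E\to E_i[j]\to\tau E_k[l],\qquad E_i[j]\to E'\to E_k[l]\xrightarrow{g}\tau E_i[j]$$
appearing before Theorem \ref{exp}. Because $\mc=\md^b(\mathrm{mod}\,kQ)/\tau\circ[-1]$, the shift functor of $\mc$ is identified with $\tau$, so $L'[1]\cong\tau E_k[l]$ and $L[1]\cong\tau E_i[j]$; thus the displayed triangles are precisely the two non-split triangles $L'\to B\to L\to L'[1]$ and $L\to B'\to L'\to L[1]$, with $B=E$ and $B'=E'$. Theorem \ref{exp} then gives $X_L X_{L'}=X_E+X_{E'}=X_B+X_{B'}$, which is axiom (iii). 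The part requiring the most care is exactly this bookkeeping: confirming that one-dimensionality of $\mathrm{Ext}^1_{\mc}$ splits cleanly into the asymmetric module-level conditions, so that Theorem \ref{exp} applies with a definite choice of $M$ and $N$, and that the two cluster-category triangles are correctly identified with those of Palu's axiom via the identification $[1]\cong\tau$.
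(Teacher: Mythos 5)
Your proposal is correct and follows exactly the route the paper intends: the paper's own ``proof'' is just the one-line remark that the axioms of a cluster character can be checked from Proposition \ref{DM}(1) and Theorem \ref{exp}, and you supply precisely that check. The only added content on your side is the (correct) bookkeeping that $\dim_k\mathrm{Ext}^1_{\mc}(L,L')=1$ forces the asymmetric module-level hypotheses of Theorem \ref{exp} and that $[1]\cong\tau$ identifies the two triangles, which the paper leaves implicit.
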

We will construct some inductive formulas in the next section. For
convenience,  we write down the following corollary.
\begin{Cor}\label{DM1}
With the above notation, we have
$$
(1)\ X_{E_{i+n}}X_{E_{i}[n]}=X_{E_{i}[n+1]}+X_{E_{i}[n-1]}
$$
$$
(2)\ X_{E_i}X_{E_{i+1}[n]}=X_{E_{i}[n+1]}+X_{E_{i+2}[n-1]}.
$$
\end{Cor}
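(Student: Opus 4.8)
The plan is to obtain both identities as special cases of Theorem \ref{exp}, by choosing the two indecomposables suitably and then reading off the middle terms $E$ and $E'$ from the combinatorics of the rank-$r$ tube. Throughout I will use that in $\mathrm{mod}\,kQ$ one has $\tau E_j[m]=E_{j-1}[m]$ and $\tau^{-1}E_j[m]=E_{j+1}[m]$, that $E_j[m]$ has socle $E_j$ and top $E_{j+m-1}$, and that $E_j[m]/E_j\cong E_{j+1}[m-1]$.

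For (1) I would apply Theorem \ref{exp} with the two modules $E_{i+n}$ and $E_i[n]$ playing the roles of $M$ and $N$ respectively. I first verify the hypotheses: any nonzero map $E_i[n]\to\tau E_{i+n}=E_{i+n-1}$ factors through the top $E_{i+n-1}$ of $E_i[n]$, so $\mathrm{dim}_k\mathrm{Hom}_{kQ}(E_i[n],\tau E_{i+n})=1$, and by Auslander--Reiten duality $\mathrm{dim}_k\mathrm{Ext}^1_{kQ}(E_{i+n},E_i[n])=1$ as well. Substituting the present indices into the formula for $E$ in Theorem \ref{exp}, the middle term of the first triangle is $E\cong E_i[n+1]$ (the second summand being $E_{i+n}[0]=0$). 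For the second triangle the relevant map $g\colon E_i[n]\to E_{i+n-1}$ is the canonical surjection onto the top, so $\mathrm{ker}\,g=E_i[n-1]$ and $\mathrm{coker}\,g=0$, giving $E'\cong E_i[n-1]$. Theorem \ref{exp} then yields (1), the case $n=1$ being covered by the convention $X_{E_i[0]}=X_0=1$.

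For (2) I would instead take the modules $E_{i+1}[n]$ and $E_i$ as $M$ and $N$. Here a nonzero map $E_i\to\tau E_{i+1}[n]=E_i[n]$ is forced to embed $E_i$ as the socle, so once again $\mathrm{dim}_k\mathrm{Hom}_{kQ}(E_i,\tau E_{i+1}[n])=\mathrm{dim}_k\mathrm{Ext}^1_{kQ}(E_{i+1}[n],E_i)=1$, and the middle term of the first triangle is $E\cong E_i[n+1]$. This time $g\colon E_i\to E_i[n]$ is the socle inclusion, so $\mathrm{ker}\,g=0$ while $\mathrm{coker}\,g\cong E_{i+1}[n-1]$, whence $\tau^{-1}\mathrm{coker}\,g\cong E_{i+2}[n-1]$ and $E'\cong E_{i+2}[n-1]$. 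Applying Theorem \ref{exp} and using the commutativity of the product gives (2).

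The one step requiring genuine care is the identification of $E'$, since it is assembled from the four-term exact sequence attached to $g$ in Theorem \ref{exp}; everything hinges on recognizing $g$ as the canonical projection onto the top in case (1) and as the socle inclusion in case (2). Once $g$ is pinned down this way, computing $\mathrm{ker}\,g$, $\mathrm{coker}\,g$ and the twist $\tau^{-1}\mathrm{coker}\,g$ is immediate from the tube structure, and the remaining index bookkeeping is routine.
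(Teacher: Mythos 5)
Your proof is correct and follows exactly the route the paper intends: Corollary \ref{DM1} is stated as an immediate specialization of Theorem \ref{exp} (the paper gives no further argument), and your identifications of $E$ and $E'$ in the two triangles, including the verification of the one-dimensionality hypotheses and the computation $\tau^{-1}\mathrm{coker}\,g\cong E_{i+2}[n-1]$, are all accurate. Nothing is missing.
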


\section{Inductive multiplication formulas}
In this section, we will give inductive multiplication formulas for
any two generalized cluster variables on $\mathrm{mod}kQ$. Note that
these inductive multiplication formulas are an analogue of those for
tubes in \cite {DXX} for acyclic cluster algebras.

\begin{Thm}\label{16}
Let $i,j, k,l,m$ and $r$ be in $\bbz$ such that $1\leq k\leq
mr+l,0\leq l\leq r-1,1\leq i,j\leq r,m\geq 0$.

\nd (1)When $j\leq i$, then

1)for $k+i\geq r+j$, we have
$X_{E_i[k]}X_{E_j[mr+l]}=X_{E_i[(m+1)r+l+j-i]}X_{E_j[k+i-r-j]}+X_{E_i[r+j-i-1]}X_{E_{k+i+1}[(m+1)r+l+j-k-i-1]},$

2)for $k+i< r+j$ and $i\leq l+j\leq k+i-1$, we have
$X_{E_i[k]}X_{E_j[mr+l]}=X_{E_j[mr+k+i-j]}X_{E_i[l+j-i]}+X_{E_j[mr+i-j-1]}X_{E_{l+j+1}[k+i-l-j-1]},$

3)for other conditions, i.e, there are no extension between $E_i[k]$
and $E_j[mr+l]$, we have $X_{E_i[k]}X_{E_j[mr+l]}=X_{E_i[k]\oplus
E_j[mr+l]}$.

\nd (2)When $j> i$, then

1)for $k\geq j-i,$ we have
$X_{E_i[k]}X_{E_j[mr+l]}=X_{E_i[j-i-1]}X_{E_{k+i+1}[mr+l+j-k-i-1]}+X_{E_i[mr+l+j-i]}X_{E_j[k+i-j]},$

2)for $k< j-i$ and $i\leq l+j-r\leq k+i-1$, we have
$X_{E_i[k]}X_{E_j[mr+l]}=X_{E_j[(m+1)r+k+i-j]}X_{E_i[l+j-r-i]}+X_{E_j[(m+1)r+i-j-1]}X_{E_{l+j+1}[k+r+i-l-j-1]},$

3)for other conditions, i.e, there are no extension between $E_i[k]$
and $E_j[mr+l]$, we have $X_{E_i[k]}X_{E_j[mr+l]}=X_{E_i[k]\oplus
E_j[mr+l]}.$
\end{Thm}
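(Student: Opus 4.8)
The plan is to prove all the formulas by induction on the length $k$ of the first factor $E_i[k]$, using Theorem \ref{exp} to anchor the argument and Corollary \ref{DM1} both for the base case and to carry out the inductive step.

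First I would record the combinatorial description of the extension groups in the tube: for the indecomposables $E_i[k]$ and $E_j[mr+l]$ one reads off $\mathrm{dim}_k\ext^1_{kQ}(E_i[k], E_j[mr+l])$ and the dual $\hom$-space directly from the structure of a rank-$r$ tube. This determines exactly when we are in the ``no extension'' situation 3), where the claimed identity is just the multiplicativity $X_{E_i[k]}X_{E_j[mr+l]}=X_{E_i[k]\oplus E_j[mr+l]}$ of Proposition \ref{DM}(1); and in the remaining cases it pins down the two triangles whose middle terms $E$ and $E'$ furnish the right-hand side. When the extension space is one-dimensional these middle terms are precisely the modules produced by Theorem \ref{exp}, so the formula holds verbatim, and this is the anchor of the induction.

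The base case $k=1$ reduces to Corollary \ref{DM1}: substituting $k=1$ into each formula in (1) and (2) collapses, via $X_{E_a[0]}=1$, to exactly one of the two identities $X_{E_{i+n}}X_{E_i[n]}=X_{E_i[n+1]}+X_{E_i[n-1]}$ and $X_{E_i}X_{E_{i+1}[n]}=X_{E_i[n+1]}+X_{E_{i+2}[n-1]}$. For the inductive step I would use the recursion $X_{E_i[k]}=X_{E_{i+k-1}}X_{E_i[k-1]}-X_{E_i[k-2]}$, which is a rearrangement of Corollary \ref{DM1}(1). Multiplying through by $X_{E_j[mr+l]}$ and applying the induction hypothesis to the two shorter products $X_{E_i[k-1]}X_{E_j[mr+l]}$ and $X_{E_i[k-2]}X_{E_j[mr+l]}$ expresses the desired product as $X_{E_{i+k-1}}$ times a known sum minus another known sum. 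Each resulting product of the simple variable $X_{E_{i+k-1}}$ with an indecomposable variable is then rewritten by Corollary \ref{DM1} (or by Proposition \ref{DM}(1) when the relevant extension vanishes), after which the terms are collected.

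The main obstacle is bookkeeping rather than any conceptual difficulty: the case boundaries in the statement are inequalities in $i,j,k,l,m$ read modulo $r$, and lowering $k$ by one can push $E_i[k-1]$ or $E_i[k-2]$ into a neighbouring case, so the induction must be arranged so that the correct sub-case of the hypothesis feeds the correct summand of the conclusion. The delicate point is to check that, after applying Corollary \ref{DM1} to the simple-times-indecomposable subproducts, the many index shifts such as $(m+1)r+l+j-i$ and $k+i+1$ combine, with the cancellations supplied by the subtracted term $X_{E_i[k-2]}X_{E_j[mr+l]}$, to reproduce exactly the two products on the right-hand side in each of the six cases.
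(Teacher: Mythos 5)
Your proposal follows essentially the same route as the paper: induction on $k$ using the recursion $X_{E_i[k]}=X_{E_{i+k-1}}X_{E_i[k-1]}-X_{E_i[k-2]}$ coming from Corollary \ref{DM1}, with Corollary \ref{DM1} again used to expand the simple-times-indecomposable products in the inductive step before collecting terms, and Proposition \ref{DM}(1) handling the extension-free case 3). The only minor difference is that the paper works out both $k=1$ and $k=2$ as explicit base cases (as the two-step recursion requires), which your plan should also do.
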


\begin{proof}
We only prove (1) and (2) is totally similar to (1).

1) When $k=1,$ by $k+i\geq r+j$ and $1\leq j\leq i\leq
r\Longrightarrow i=r$ and $j=1.$\\
Then by Proposition \ref{DM} and Corollary \ref{DM1}, we have
$$X_{E_r}X_{E_1[mr+l]}=X_{E_r[mr+l+1]}+X_{E_2[mr+l-1]}.$$
 When $k=2,$ by $k+i\geq r+j$ and $1\leq j\leq i\leq
r\Longrightarrow
i=r\ or\ i=r-1.$\\
For $i=r\Longrightarrow j=1\ or\ j=2$:\\
The case for $i=r$ and $j=1$, we have
\begin{eqnarray}
    && X_{E_r[2]}X_{E_1[mr+l]}  \nonumber\\
   &=& (X_{E_r}X_{E_1}-1)X_{E_1[mr+l]} \nonumber\\
  &=& X_{E_1}(X_{E_r[mr+l+1]}+X_{E_2[mr+l-1]})-X_{E_1[mr+l]} \nonumber\\
  &=& X_{E_1}X_{E_r[mr+l+1]}+(X_{E_1[mr+l]}+X_{E_3[mr+l-2]})-X_{E_1[mr+l]} \nonumber\\
  &=& X_{E_1}X_{E_r[mr+l+1]}+X_{E_3[mr+l-2]}.\nonumber
\end{eqnarray}
The case for $i=r$ and $j=2$, we have
\begin{eqnarray}
&& X_{E_r[2]}X_{E_2[mr+l]} \nonumber\\
  &=& (X_{E_r}X_{E_1}-1)X_{E_2[mr+l]} \nonumber\\
  &=& X_{E_r}(X_{E_1[mr+l+1]}+X_{E_3[mr+l-1]})-X_{E_2[mr+l]} \nonumber\\
  &=& X_{E_r[mr+l+2]}+(X_{E_2[mr+l]}+X_{E_r}X_{E_3[mr+l-1]})-X_{E_2[mr+l]} \nonumber\\
  &=& X_{E_r[mr+l+2]}+X_{E_r}X_{E_3[mr+l-1]}.\nonumber
\end{eqnarray}

For $i=r-1\Longrightarrow j=1$:
\begin{eqnarray}
 && X_{E_{r-1}[2]}X_{E_1[mr+l]} \nonumber\\
  &=& (X_{E_{r-1}}X_{E_r}-1)X_{E_1[mr+l]} \nonumber\\
  &=& X_{E_{r-1}}(X_{E_r[mr+l+1]}+X_{E_2[mr+l-1]})-X_{E_1[mr+l]} \nonumber\\
  &=& (X_{E_{r-1}[mr+l+2]}+X_{E_1[mr+l]})+X_{E_{r-1}}X_{E_2[mr+l-1]}-X_{E_1[mr+l]} \nonumber\\
  &=& X_{E_{r-1}[mr+l+2]}+X_{E_{r-1}}X_{E_2[mr+l-1]}.\nonumber
\end{eqnarray}\\
Now, suppose it holds for $k\leq n,$ then by induction we have
\begin{eqnarray*}
   && X_{E_i[n+1]}X_{E_j[mr+l]}\nonumber \\
   &=& (X_{E_i[n]}X_{E_{i+n}}-X_{E_i[n-1]})X_{E_j[mr+l]}\nonumber \\
   &=& X_{E_{i+n}}(X_{E_i[n]}X_{E_j[mr+l]})-X_{E_i[n-1]}X_{E_j[mr+l]} \nonumber\\
   &=& X_{E_{i+n}}(X_{E_i[(m+1)r+l+j-i]}X_{E_j[n+i-r-j]}+X_{E_i[r+j-i-1]}X_{E_{n+i+1}[(m+1)r+l+j-n-i-1]})\nonumber \\
   && -(X_{E_i[(m+1)r+l+j-i]}X_{E_j[n+i-r-j-1]}+X_{E_i[r+j-i-1]}X_{E_{n+i}[(m+1)r+l+j-n-i]}) \nonumber\\
   &=& X_{E_i[(m+1)r+l+j-i]}(X_{E_j[n+i+1-r-j]}+X_{E_j[n+i-r-j-1]}) \nonumber\\
   && +X_{E_i[r+j-i-1]}(X_{E_{n+i}[(m+1)r+l+j-n-i]}
+X_{E_{n+i+2}[(m+1)r+l+j-n-i-2]}) \nonumber\\
   && -(X_{E_i[(m+1)r+l+j-i]}X_{E_j[n+i-r-j-1]}+X_{E_i[r+j-i-1]}X_{E_{n+i}[(m+1)r+l+j-n-i]})\nonumber \\
   &=& X_{E_i[(m+1)r+l+j-i]}X_{E_j[n+i+1-r-j]}+X_{E_i[r+j-i-1]}X_{E_{n+i+2}[(m+1)r+l+j-n-i-2]}.
\end{eqnarray*}

2)  When $k=1,$ by $i\leq l+j\leq k+i-1\Longrightarrow i\leq
l+j\leq i\Longrightarrow i=l+j.$\\
Then by  by Proposition \ref{DM} and Corollary \ref{DM1}, we have
$$X_{E_{i}}X_{E_j[mr+l]}=X_{E_{l+j}}X_{E_j[mr+l]}=X_{E_j[mr+l+1]}+X_{E_j[mr+l-1]}$$
  When $k=2,$ by $i\leq l+j\leq k+i-1\Longrightarrow i\leq
l+j\leq i+1\Longrightarrow i=l+j\ or\ i+1=l+j$:\\
For $i=l+j$, we have
\begin{eqnarray*}
   && X_{E_{i}[2]}X_{E_j[mr+l]} \nonumber\\
   &=& X_{E_{l+j}[2]}X_{E_j[mr+l]}\nonumber \\
   &=& (X_{E_{l+j}}X_{E_{l+j+1}}-1)X_{E_j[mr+l]}\nonumber \\
   &=& (X_{E_j[mr+l+1]}+X_{E_j[mr+l-1]})X_{E_{l+j+1}}-X_{E_j[mr+l]}\nonumber \\
   &=& X_{E_j[mr+l+2]}+X_{E_j[mr+l]}+X_{E_{l+j+1}}X_{E_j[mr+l-1]}-X_{E_j[mr+l]}\nonumber \\
   &=& X_{E_j[mr+j+2]}+X_{E_{l+j+1}}X_{E_j[mr+l-1]}.
\end{eqnarray*}

For $i+1=l+j$, we have
\begin{eqnarray*}
   && X_{E_{i}[2]}X_{E_j[mr+l]}\nonumber \\
   &=& X_{E_{l+j-1}[2]}X_{E_j[mr+l]}\nonumber \\
   &=& (X_{E_{l+j-1}}X_{E_{l+j}}-1)X_{E_j[mr+l]}\nonumber \\
   &=& (X_{E_j[mr+l+1]}+X_{E_j[mr+l-1]})X_{E_{l+j-1}}-X_{E_j[mr+l]} \nonumber \\
   &=& X_{E_j[mr+l+1]}X_{E_{l+j-1}}+(X_{E_j[mr+l]}+X_{E_j[mr+l-2]})-X_{E_j[mr+l]}\nonumber \\
   &=& X_{E_j[mr+l+1]}X_{E_{l+j-1}}+X_{E_j[mr+l-2]}.
\end{eqnarray*}

Suppose it holds for $k\leq n,$ then by induction we have
\begin{eqnarray*}
   && X_{E_i[n+1]}X_{E_j[mr+l]}\nonumber \\
   &=& (X_{E_i[n]}X_{E_{i+n}}-X_{E_i[n-1]})X_{E_j[mr+l]}\nonumber \\
   &=& (X_{E_i[n]}X_{E_j[mr+l]})X_{E_{i+n}}-X_{E_i[n-1]}X_{E_j[mr+l]}\nonumber\\
   &=& (X_{E_j[mr+n+i-j]}X_{E_i[l+j-i]}+X_{E_j[mr+i-j-1]}X_{E_{l+j+1}[n+i-l-j-1]})X_{E_{i+n}} \nonumber\\
   && -(X_{E_j[mr+n+i-j-1]}X_{E_i[l+j-i]}+X_{E_j[mr+i-j-1]}X_{E_{l+j+1}[n+i-l-j-2]})\nonumber \\
   &=& (X_{E_j[mr+n+i+1-j]}+X_{E_j[mr+n+i-j-1]})X_{E_i[l+j-i]} \nonumber\\
   && +(X_{E_{l+j+1}[n+i-l-j]}+X_{E_{l+j+1}[n+i-l-j-2]})X_{E_j[mr+i-j-1]} \nonumber\\
   && -(X_{E_j[mr+n+i-j-1]}X_{E_i[l+j-i]}+X_{E_j[mr+i-j-1]}X_{E_{l+j+1}[n+i-l-j-2]})\nonumber \\
   &=& X_{E_j[mr+n+i+1-j]}X_{E_i[l+j-i]}+X_{E_j[mr+i-j-1]}X_{E_{l+j+1}[n+i-l-j]}.
\end{eqnarray*}

3)  It is trivial by the definition of the  Caldero-Chapoton map.
\end{proof}

\section  {A $\mathbb{Z}$-basis for cyclic quivers}

In this section, we will focus on studying  the following set
$$\mathcal{B}(Q)=\{X_{R}|\mathrm{Ext}_{k Q}^{1}(R,R)=0\}.$$
 We prove that $\mathcal{B}(Q)$ is a $\mathbb{Z}$-basis of the
algebra $\mathcal{AH}(Q)$ generated by all these generalized cluster
variables. We first give the following definition.
\begin{Definition}\label{p}
For $M, N\in$ $\mathrm{mod} kQ$ with
$\mathrm{\underline{dim}}M=(m_{1},\cdots,m_{r})$ and
$\mathrm{\underline{dim}}N=(s_{1},\cdots,s_{r})$, we write
$\mathrm{\underline{dim}}M\preceq \mathrm{\underline{dim}}N$ if
$m_{i}\leq s_{i}\ for\ 1\leq i\leq r$. Moreover, if there exists
some i such that $m_{i}< s_{i}$, then we write
$\mathrm{\underline{dim}}M\prec \mathrm{\underline{dim}}N.$
\end{Definition}

\begin{Remark}\label{7}
It is easy to see that $\mathrm{\underline{dim}}E_{i+2}[n-1]\prec
\mathrm{\underline{dim}}E_{i}[n+1]$ and
$\mathrm{\underline{dim}}E_{i}[n-1]\prec
\mathrm{\underline{dim}}E_{i}[n+1]$ in Corollary \ref{DM1}.
\end{Remark}

\begin{Lemma}\label{6}
Let $T_1, T_2$ be $kQ$-modules such that
$\mathrm{\underline{dim}}T_1=\mathrm{\underline{dim}}T_2$. Then we
have
$$X_{T_1}=X_{T_2}+\sum_{\mathrm{\underline{dim}}R\prec
\mathrm{\underline{dim}} T_2}a_{R}X_{R}$$ where $R\in
\mathrm{mod}kQ$ and $a_{R}\in \mathbb{Z}$.
\end{Lemma}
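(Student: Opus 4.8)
The plan is to prove the considerably stronger statement that \emph{every} module of a fixed dimension vector is congruent, modulo the $\mathbb{Z}$-span of the lower $X_R$, to the unique semisimple module with that dimension vector; the lemma then follows immediately by transitivity. Fix $\udim T_2=(d_1,\dots,d_r)$ and let $S=\bigoplus_{i=1}^r E_i^{\,d_i}$ be the semisimple module with this dimension vector. Write $L$ for the $\mathbb{Z}$-submodule of $\mathbb{Z}[\mathbf{x}^{\pm1}]$ spanned by all $X_R$ with $\udim R\prec \udim T_2$, and call two modules of dimension vector $\udim T_2$ \emph{equivalent} when their characters differ by an element of $L$. Since $L$ is a subgroup this relation is transitive, so it suffices to prove $X_T\equiv X_S$ for every $T$ with $\udim T=\udim T_2$.

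I would prove this by induction on the quantity $N(T)=\sum_s(n_s-1)$, where $T=\bigoplus_s E_{i_s}[n_s]$ is the decomposition into indecomposables; note $N(T)\ge 0$, and $N(T)=0$ holds precisely when $T$ is semisimple, i.e.\ $T=S$, which is the base case. For the inductive step, suppose $T$ has an indecomposable summand of length $n\ge 2$, so $T=E_i[n]\oplus T'$. Applying Corollary \ref{DM1}(2) in the form $X_{E_i}X_{E_{i+1}[n-1]}=X_{E_i[n]}+X_{E_{i+2}[n-2]}$ yields $X_{E_i[n]}=X_{E_i}X_{E_{i+1}[n-1]}-X_{E_{i+2}[n-2]}$, where by Remark \ref{7} the correction $X_{E_{i+2}[n-2]}$ (interpreted as $X_0=1$ when $n=2$, and then still $\udim E_{i+2}[0]=0\prec\udim E_i[2]$) satisfies $\udim E_{i+2}[n-2]\prec\udim E_i[n]$.

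Multiplying this identity by $X_{T'}$ and using Proposition \ref{DM}(1), I obtain $X_T=X_{E_i[n]}X_{T'}=X_{E_i\oplus E_{i+1}[n-1]\oplus T'}-X_{E_{i+2}[n-2]\oplus T'}$. The correction here lies in $L$: indeed $\udim E_{i+2}[n-2]\prec\udim E_i[n]$ forces $\udim\!\big(E_{i+2}[n-2]\oplus T'\big)\prec \udim E_i[n]+\udim T'=\udim T_2$, since adding the fixed vector $\udim T'$ preserves the strict inequality $\prec$. Thus $T\equiv E_i\oplus E_{i+1}[n-1]\oplus T'=:T''$, and a direct count shows $N(T'')=N(T)-1$ (the summand $E_i[n]$ of excess length $n-1$ is traded for $E_i$ and $E_{i+1}[n-1]$, of excess lengths $0$ and $n-2$). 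By the induction hypothesis $X_{T''}\equiv X_S$, whence $X_T\equiv X_S$, completing the induction; applying it to both $T_1$ and $T_2$ gives $X_{T_1}\equiv X_S\equiv X_{T_2}$, which is the assertion.

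The single point requiring care — and the one I regard as the crux — is the bookkeeping of the lower-order terms: one must check that multiplying a congruence valid modulo $\mathrm{span}\{X_R:\udim R\prec\udim E_i[n]\}$ by the fixed factor $X_{T'}$ lands inside $L$. This is exactly the implication $\udim R\prec\udim E_i[n]\Rightarrow \udim(R\oplus T')\prec\udim T_2$ combined with $X_RX_{T'}=X_{R\oplus T'}$ from Proposition \ref{DM}(1). Everything else is a finite induction driven solely by the splitting relation of Corollary \ref{DM1}(2), which peels a simple socle off any non-semisimple summand and thereby drives $T$ toward the semisimple module $S$.
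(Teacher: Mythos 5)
Your proof is correct and follows essentially the same route as the paper's: both arguments pivot through the common quantity $\prod_i X_{E_i}^{d_i}=X_S$ (the character of the semisimple module with dimension vector $\udim T_2$, by Proposition \ref{DM}(1)) and show that each $X_{T_i}$ differs from it only by terms $X_R$ with $\udim R\prec\udim T_2$, the reduction being driven by the relation of Corollary \ref{DM1}. Your write-up is somewhat more self-contained — the explicit induction on $N(T)$ and the verification that multiplying a congruence by $X_{T'}$ stays in $L$ replace the paper's appeal to Theorem \ref{16} — but the underlying mechanism is identical.
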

\begin{proof}
Suppose $T_1=T_{11}\oplus T_{12}\oplus \cdots \oplus T_{1m}$ and
$\mathrm{\underline{dim}}T_1=(d_1,d_2,\cdots,d_r)$ where
$T_{1i}(1\leq i\leq m)$ are indecomposable regular modules with
quasi-socle $E_{i_{1}}$ and
$\underline{dim}T_{1i}=(d_{1i},d_{2i},\cdots, d_{ri})$ for $1\leq
i\leq m.$ Thus, we can see that
$(d_1,d_2,\cdots,d_r)=\sum_{i=1}^{m}(d_{1i},d_{2i},\cdots, d_{ri}).$
By Corollary \ref{DM1} and Theorem \ref{16}, we have
\begin{eqnarray*}
   && X^{d_1}_{E_1}X^{d_2}_{E_2}\cdots X^{d_r}_{E_r}\nonumber \\
   &=& \prod_{i=1}^{m}(X_{E_{i_{1}}}X_{E_{i_{1}+1}}X_{E_{i_{1}+2}}\cdots X_{E_{i_{1}+d_{1i}+\cdots+d_{ri}-1}}) \nonumber \\
   &=& \prod_{i=1}^{m}(X_{T_{1i}}+\sum_{\mathrm{\underline{dim}}L'\prec
\mathrm{\underline{dim}}T_{1i}}a_{L'}X_{L'})\nonumber \\
   &=& X_{T_{1}}+\sum_{\mathrm{\underline{dim}}L\prec
\mathrm{\underline{dim}}T_{1}}a_{L}X_{L}.
\end{eqnarray*}
where $a_{L'},a_{L}$ are integers. Similarly we have
$$X^{d_1}_{E_1}X^{d_2}_{E_2}\cdots X^{d_n}_{E_n}=
 X_{T_2}+\sum_{\mathrm{\underline{dim}}M\prec
\mathrm{\underline{dim}}T_2}b_{M}X_{M}$$ where $b_{M}$ are integers.

 Thus
$$X_{T_{1}}+\sum_{\mathrm{\underline{dim}}L'\prec
\mathrm{\underline{dim}}T_{1}}a_{L}X_{L}=X_{T_2}+\sum_{\mathrm{\underline{dim}}M\prec
\mathrm{\underline{dim}}T_2}b_{M}X_{M}.$$
 Therefore, we have
$$X_{T_1}=X_{T_2}+\sum_{\mathrm{\underline{dim}}R\prec
\mathrm{\underline{dim}}T_2}a_{R}X_{R}$$ where $a_{R}$ are integers.
\end{proof}
We explain the method used in Lemma \ref{6} by the following
example.
\begin{Example}
Consider $r=4,X_{E_2[5]}$ and $X_{E_1[4]\oplus E_2}$. We can see
that $\mathrm{\underline{dim}}(E_1[4]\oplus
E_2)=\mathrm{\underline{dim}}E_2[5]=\mathrm{\underline{dim}}(E_1\oplus
2E_2\oplus E_3\oplus E_4)$ and satisfy the conditions in Lemma
\ref{6}. Thus, for $X_{E_1[4]\oplus E_2}$, we have
\begin{eqnarray*}
X_{E_1}X^{2}_{E_2}X_{E_3}X_{E_4}&=& X_{E_1}X_{E_2}X_{E_3}X_{E_4}X_{E_2}\nonumber \\
   &=& (X_{E_1[2]}+1)X_{E_3}X_{E_4}X_{E_2}\nonumber \\
   &=& (X_{E_1[3]}+X_{E_1})X_{E_4}X_{E_2}+X_{E_3}X_{E_4}X_{E_2}\nonumber \\
   &=& (X_{E_1[4]}+X_{E_1[2]})X_{E_2}+X_{E_1}X_{E_4}X_{E_2}+X_{E_3}X_{E_4}X_{E_2}\nonumber \\
   &=& X_{E_1[4]\oplus E_2}+X_{E_1[2]\oplus E_2}+(X_{E_1[2]}+1)X_{E_4}+(X_{E_2[2]}+1)X_{E_4}\nonumber \\
   &=& X_{E_1[4]\oplus E_2}+X_{E_1[2]\oplus E_2}+X_{E_1[2]\oplus E_4}+X_{E_2[3]}+X_{E_2}+2X_{E_4}.
\end{eqnarray*}
Similarly for $X_{E_2[5]}$, we have
\begin{eqnarray*}
X_{E_1}X^{2}_{E_2}X_{E_3}X_{E_4}&=& X_{E_2}X_{E_3}X_{E_4}X_{E_1}X_{E_2}\nonumber \\
   &=& (X_{E_2[2]}+1)X_{E_4}X_{E_1}X_{E_2}\nonumber \\
   &=& (X_{E_2[3]}+X_{E_2})X_{E_1}X_{E_2}+X_{E_4}X_{E_1}X_{E_2}\nonumber \\
   &=& (X_{E_2[4]}+X_{E_2[2]})X_{E_2}+X_{E_2}X_{E_1}X_{E_2}+X_{E_4}X_{E_1}X_{E_2}\nonumber \\
   &=& X_{E_2[5]}+X_{E_2[3]}+X_{E_2[2]\oplus E_2}+(X_{E_1[2]}+1)X_{E_2}+(X_{E_1[2]}+1)X_{E_4}\nonumber \\
   &=& X_{E_2[5]}+X_{E_2[3]}+X_{E_2[2]\oplus E_2}+X_{E_1[2]\oplus E_2}+X_{E_2}+X_{E_1[2]\oplus E_4}+X_{E_4}.
\end{eqnarray*}
Hence, $X_{E_1[4]\oplus E_2}=X_{E_2[5]}+X_{E_2[2]\oplus
E_2}-X_{E_4}$, where $\mathrm{\underline{dim}}(E_2[2]\oplus
E_2)\prec \mathrm{\underline{dim}}E_2[5],
\mathrm{\underline{dim}}E_4\prec \mathrm{\underline{dim}}E_2[5].$
\end{Example}

\begin{Lemma}\label{lem}
$$X_{E_i[r]}=X_{E_{i+1}[r-2]}+2.$$
\end{Lemma}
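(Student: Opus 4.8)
The plan is to prove the identity $X_{E_i[r]}=X_{E_{i+1}[r-2]}+2$ directly from the explicit formula in Proposition \ref{E[n]}, rather than from a multiplication formula. The module $E_i[r]$ has length equal to the rank of the tube, so its dimension vector is the all-ones vector $(1,\dots,1)$; this self-periodicity is exactly what should produce the clean constant $2$ on the right-hand side. First I would write out $X_{E_i[r]}$ by setting $l=i$ and $n=r$ in Proposition \ref{E[n]}, giving
\begin{equation}
X_{E_i[r]}=\frac{x_{i+r}}{x_i}+\sum_{k=1}^{r-1}\frac{x_{i+r}\,x_{i+r-1}}{x_{i+k-1}\,x_{i+k}}+\frac{x_{i+r-1}}{x_{i+r-1}}. \nonumber
\end{equation}
Using the periodicity $x_{i+r}=x_i$ the first term becomes $1$ and the last term becomes $1$, so those two boundary contributions already supply the additive constant $2$; the middle sum simplifies to $\sum_{k=1}^{r-1} x_{i+r-1}/(x_{i+k-1}x_{i+k})$ after cancelling $x_{i+r}=x_i$ against the $x_{i+k-1}$ factor only in the $k=1$ term, so care is needed to keep the summand in its general form.

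Next I would expand $X_{E_{i+1}[r-2]}$ by setting $l=i+1$, $n=r-2$ in the same proposition, obtaining
\begin{equation}
X_{E_{i+1}[r-2]}=\frac{x_{i+r-1}}{x_{i+1}}+\sum_{k=1}^{r-3}\frac{x_{i+r-1}\,x_{i+r}}{x_{i+k}\,x_{i+k+1}}+\frac{x_{i+r}}{x_{i+r-3}}. \nonumber
\end{equation}
The strategy is then to reindex this second sum so that it matches, term by term, the surviving middle sum of $X_{E_i[r]}$. After applying $x_{i+r}=x_i$, a shift of the summation index (replacing $k$ by $k-1$, so the range $1\le k\le r-3$ becomes $2\le k\le r-2$) should align the generic terms with those appearing in $X_{E_i[r]}$, and the leading term $x_{i+r-1}/x_{i+1}$ together with the trailing term $x_i/x_{i+r-3}$ of $X_{E_{i+1}[r-2]}$ should account precisely for the $k=1$ and $k=r-1$ terms of the middle sum of $X_{E_i[r]}$ that are left over. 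What remains is to verify that the two collections of monomials coincide exactly, leaving the constant $2$ as the only discrepancy.

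The main obstacle will be the bookkeeping of the index shifts modulo $r$: one must track the subscripts $x_{i+k-1},x_{i+k}$ versus $x_{i+k},x_{i+k+1}$ carefully and confirm that the reduced ranges and the boundary terms fit together without an off-by-one error, especially checking the small-$r$ edge cases (where $r-2$ or $r-3$ may force some sums to be empty). Since $\chi$ of each Grassmannian is $0$ or $1$ and every summand is an honest Laurent monomial, there is no geometric subtlety here—the entire argument reduces to a verification that two finite sums of Laurent monomials agree, so I would simply carry out the term matching explicitly and confirm that the leftover contributions are exactly $1+1=2$.
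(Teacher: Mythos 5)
Your approach is exactly the paper's: the paper's entire proof is ``According to Proposition \ref{E[n]}'', and your plan of expanding both sides via that formula and matching monomials (the two boundary terms of $X_{E_i[r]}$ become $1+1=2$ since $x_{i+r}=x_i$, while the $k=1$ and $k=r-1$ terms of its middle sum equal the boundary terms $\frac{x_{i+r-1}}{x_{i+1}}$ and $\frac{x_{i+r}}{x_{i+r-2}}$ of $X_{E_{i+1}[r-2]}$, with the remaining terms aligning under $k\mapsto k+1$) does go through. The only flaw is the index slip you half-anticipated: the trailing term of $X_{E_{i+1}[r-2]}$ is $\frac{x_{l+r-1}}{x_{l+n-1}}=\frac{x_{i+r}}{x_{i+r-2}}$, not $\frac{x_{i+r}}{x_{i+r-3}}$, and with that correction the matching closes.
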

\begin{proof}
According to Proposition \ref{E[n]}.
\end{proof}

\begin{Lemma}\label{7}
For any $M, N\in$ $\mathrm{mod} kQ$,  $X_MX_N$ is a
$\mathbb{Z}$-linear combination of the elements in $\mathcal{B}(Q).$
\end{Lemma}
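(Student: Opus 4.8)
The plan is to show that any product $X_M X_N$ can be written as an integer combination of elements $X_R$ with $\mathrm{Ext}^1_{kQ}(R,R)=0$, i.e.\ of ``rigid'' generalized cluster variables in $\mathcal{B}(Q)$. The strategy is induction on the dimension vector $\underline{\dim}M+\underline{\dim}N$ with respect to the partial order $\preceq$ of Definition \ref{p}. The base cases, where $M$ and $N$ are already small, reduce to the multiplication formulas of Theorem \ref{16} and Corollary \ref{DM1}, in which the right-hand terms are products of fewer or smaller indecomposables.

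First I would reduce to the case where $M$ and $N$ are indecomposable, say $M=E_i[k]$ and $N=E_j[mr+l]$, using Proposition \ref{DM}(1) to split off direct summands: since $X_{M\oplus N}=X_M X_N$, a general product is a product of $X$'s of indecomposables, and it suffices to rewrite such a product one factor-merge at a time. The central tool is Theorem \ref{16}: whenever $\mathrm{Ext}^1_{kQ}(E_i[k],E_j[mr+l])\neq 0$, it expresses $X_{E_i[k]}X_{E_j[mr+l]}$ as a sum of two products $X_A X_B + X_C X_D$, where each of the four indecomposables $A,B,C,D$ is of strictly smaller length than the larger of the two original factors (one summand ``shrinks'' toward the mouth of the tube). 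Applying Lemma \ref{6}, I can moreover replace any $X_T$ by $X_{T'}$ plus a combination of $X_R$ with $\underline{\dim}R\prec\underline{\dim}T$, which is what keeps the induction well-founded. When there are no extensions (case (3) in Theorem \ref{16}), the product is already $X_{M\oplus N}$ with $\mathrm{Ext}^1_{kQ}(M\oplus N,M\oplus N)$ possibly nonzero, so one must still resolve the self-extensions; here Lemma \ref{lem}, $X_{E_i[r]}=X_{E_{i+1}[r-2]}+2$, is the device that trades a non-rigid module $E_i[n]$ with $n\geq r$ (which has self-extensions) for strictly smaller pieces, and I would iterate it until every remaining indecomposable summand has length $<r$, hence is rigid.

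The main obstacle I anticipate is controlling the \emph{bookkeeping of rigidity} rather than mere dimension descent: Theorem \ref{16} lowers the length of one summand but can \emph{raise} the length of the other (e.g.\ producing a factor $E_j[(m+1)r+\cdots]$), so the naive induction on total dimension or on maximal length need not terminate unless one tracks the correct measure. I would therefore set up the induction on a composite invariant — primarily the dimension vector $\underline{\dim}M+\underline{\dim}N$ under $\preceq$, and secondarily the number of ``long'' indecomposable summands (those of length $\geq r$) — and check that each application of Theorem \ref{16} together with Lemma \ref{lem} strictly decreases this invariant. Verifying this decrease in every branch of the case analysis of Theorem \ref{16}, and confirming that the lower-order terms supplied by Lemma \ref{6} stay strictly below in $\prec$, is the delicate part; once that is in place, the induction closes and every $X_M X_N$ lands in the $\mathbb{Z}$-span of $\mathcal{B}(Q)$.
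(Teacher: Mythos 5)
Your proposal follows essentially the same route as the paper: reduce via Proposition \ref{DM}(1) and Theorem \ref{16} to products involving at most one non-rigid indecomposable, then use Lemma \ref{6} and Lemma \ref{lem} to trade away self-extensions. In fact you are more careful than the paper's (very terse) proof about why the rewriting terminates; the termination concern you raise is real, and your composite invariant is a reasonable way to close the gap the paper leaves implicit.
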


\begin{proof}
By  Theorem \ref{16}, we know that $X_{M}X_{N}$ must be a
$\mathbb{Z}$-linear combination of elements in the set
$$\{X_{T\oplus
R}|\mathrm{Ext}_{k Q}^{1}(T,R)=\mathrm{Ext}_{k Q}^{1}(R,T)=0\}$$
where $R$ is $0$ or any regular exceptional module and $T$ is $0$ or
any indecomposable regular module with self-extension.

By Lemma \ref{6} and Lemma \ref{lem}, we can easily find that
$X_{M}X_{N}$ is actually a $\mathbb{Z}$-linear combination of
elements in the set $\mathcal{B}(Q)$.
\end{proof}

\begin{Prop}\label{theorem1}
Let $\Omega=\{A=(a_{ij})\in M_{r\times r}(\bbz_{\geq 0})\mid
a_{i,r}\cdots a_{i,r+i-2}\neq 0\}$ where $a_{i, r+s}=a_{i,s}$ for
$i\geq 2$ and $s\in \bbn$. Let $E(A, i)=E_{i}^{a_{i,1}}\oplus \cdots
\oplus E_{i+r-2}^{a_{i, r-1}}$ for $A\in \Omega$ and $i=1, \cdots,
r.$ Then the set $\mathcal{B'}(Q)=\{X_{E(A, i)}\mid i=1, \cdots, r,
A\in \Omega\}$ is a linearly independent set over $\bbz.$
\end{Prop}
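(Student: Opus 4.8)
My plan is to first strip everything down to the combinatorics of the simple modules. By Proposition \ref{DM}(1) the map $X_?$ is multiplicative on direct sums, and Proposition \ref{E[n]} at $n=1$ gives $y_j:=X_{E_j}=(x_{j-1}+x_{j+1})/x_j$ (indices mod $r$). Hence for $R=E(A,i)$ with $\udim R=\underline b$ (so that $b_{i-1}=0$ and $b_{i+s-1}=a_{i,s}$) we have $X_{E(A,i)}=\prod_{j\neq i-1}y_j^{b_j}=y^{\underline b}$, a monomial in $y_1,\dots,y_r$ in which $y_{i-1}$ does not occur. Thus every element of $\mathcal B'(Q)$ is a \emph{non-sincere} monomial in the $y_j$ (one missing at least one variable), distinct such monomials corresponding to distinct $\underline b$; the condition $A\in\Omega$ serves only to list each monomial once. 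It therefore suffices to prove that the non-sincere monomials in $y_1,\dots,y_r$ are $\bbz$-linearly independent in $\bbz[\mathbf x^{\pm1}]$.

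The engine I would use is the relation ideal of the $y_j$. Consider $\pi:\bbz[Y_1,\dots,Y_r]\to\bbz[\mathbf x^{\pm1}]$, $Y_j\mapsto y_j$, with image $\bbz[y_1,\dots,y_r]$. From $y_jx_j=x_{j-1}+x_{j+1}$ one reads the recurrence $x_{j+1}=y_jx_j-x_{j-1}$, i.e. $\bigl(\begin{smallmatrix}x_{j+1}\\ x_j\end{smallmatrix}\bigr)=T_j\bigl(\begin{smallmatrix}x_j\\ x_{j-1}\end{smallmatrix}\bigr)$ with $T_j=\bigl(\begin{smallmatrix}Y_j&-1\\ 1&0\end{smallmatrix}\bigr)$. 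Going once around the cycle and using $x_0=x_r$, $x_{r+1}=x_1$ shows that $\bigl(\begin{smallmatrix}x_1\\ x_r\end{smallmatrix}\bigr)$ is a fixed vector of $M:=T_r\cdots T_1\in SL_2$, so $M$ has eigenvalue $1$ and
\[
g:=\operatorname{tr}(T_r\cdots T_1)-2=0
\]
is a relation among the $y_j$ (for $r=3$ it reads $Y_1Y_2Y_3-Y_1-Y_2-Y_3-2$, for $r=2$ it is $Y_1Y_2-4$). Since the $y_j$ are homogeneous of degree $0$, the generic fibre of $\mathbf x\mapsto(y_j)$ is the scaling orbit, so $\bbz[y_1,\dots,y_r]$ has transcendence degree $r-1$ and $\ker\pi$ is a height-one prime; as $(g)\subseteq\ker\pi$, the equality $\ker\pi=(g)$ will follow once $g$ is irreducible.

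The conclusion then comes from standard monomials. The entries of $T_r\cdots T_1$ are multilinear in the $Y_j$ (each $Y_j$ sits in a single factor), so $g$ is multilinear with leading monomial $Y_1Y_2\cdots Y_r$ (coefficient $1$) in any graded order. Hence $\{g\}$ is a Gr\"obner basis of the principal ideal $(g)=\ker\pi$, and the standard monomials — exactly those not divisible by $Y_1\cdots Y_r$, i.e. the non-sincere ones — form a $\bbz$-basis of $\bbz[Y]/\ker\pi\cong\bbz[y_1,\dots,y_r]$. Their images $y^{\underline b}$ are therefore $\bbz$-linearly independent in $\bbz[\mathbf x^{\pm1}]$, which is precisely the asserted independence of $\mathcal B'(Q)$.

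The real work is the determination of $\ker\pi$. The transcendence-degree/fibre computation is the clean input that rules out any hidden relations and is what upgrades ``non-sincere monomials span'' to ``non-sincere monomials are independent.'' The step I expect to be hardest is the irreducibility of $g=\operatorname{tr}(T_r\cdots T_1)-2$ over $\bbz$ (equivalently over $\bbq$, as $g$ is primitive): one must rule out any factorisation of this multilinear ``periodic continuant minus $2$.'' If that proves awkward, a safe elementary fallback is to argue the independence directly: clearing denominators in a hypothetical relation $\sum c_RX_R=0$ reduces it to a polynomial identity in the $\prod_{j\neq i-1}(x_{j-1}+x_{j+1})^{b_j}$, and one inducts on the order $\preceq$ of Definition \ref{p}, exhibiting for a $\preceq$-maximal $R=E(A,i)$ a monomial of strictly smaller $x_j$-degree than any competing term by exploiting that $X_{E(A,i)}$ has no pole at the omitted vertex $x_{i-1}$; this is the hands-on counterpart of the Gr\"obner statement above.
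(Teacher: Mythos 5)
Your route is genuinely different from the paper's. The reduction to showing that the non-sincere monomials in $y_j=X_{E_j}=(x_{j-1}+x_{j+1})/x_j$ are $\mathbb{Z}$-independent is also the paper's starting point (the paper likewise rewrites $X_{E(A,i)}=\prod_j\bigl(\frac{x_{j+1}+x_{j-1}}{x_j}\bigr)^{a_{i,j-i+1}}$), but from there the paper argues directly on Laurent polynomials: it orders $x_r<x_1<\cdots<x_{r-1}$, isolates in a putative vanishing combination the terms realizing the minimal exponent of $x_r$, factors out $X_{E_r}^{l_r}$, then repeats with $x_1$, and so on until some coefficient is forced to vanish. You instead compute the relation ideal of the $y_j$. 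Your verification that $g=\mathrm{tr}(T_r\cdots T_1)-2\in\ker\pi$ is correct (the eigenvector $(x_1,x_r)^{t}$ of the $SL_2$ monodromy forces trace $2$), the transcendence-degree argument showing $\ker\pi$ is a height-one prime is sound, and the Gr\"obner/standard-monomial endgame is fine given that $\ker\pi=(g)$.

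The gap is the one you flag yourself: the irreducibility of $g$ is not an optional refinement but the load-bearing step. Without it you only obtain $\ker\pi=(h)$ for some multilinear factor $h$ of $g$ whose leading monomial is $\prod_{j\in S}Y_j$ for a possibly proper subset $S$, and then the standard monomials are only those not divisible by $\prod_{j\in S}Y_j$, a strictly smaller set than the non-sincere ones; the proposition would not follow. The claim is true and can be closed as follows: $\mathrm{tr}(T_r\cdots T_1)=\sum_{P}(-1)^{|P|}\prod_{j\notin\cup P}Y_j$, the sum over matchings $P$ of the $r$-cycle. If $g=h_1h_2$ nontrivially, multilinearity forces $h_1\in\mathbb{Q}[Y_S]$, $h_2\in\mathbb{Q}[Y_{S^c}]$ with $S,S^c$ nonempty; pick adjacent vertices $a\in S$, $b\in S^c$. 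The coefficient of $Y_1\cdots Y_r/Y_a$ in $g$ is $0$ (a matching covers an even number of vertices), and since the top coefficient of $h_2$ is $\pm1$ this forces the coefficient of $Y_S/Y_a$ in $h_1$ to vanish; likewise for $Y_{S^c}/Y_b$ in $h_2$. Then the coefficient of $Y_1\cdots Y_r/(Y_aY_b)$ in $h_1h_2$ is $0$, whereas in $g$ it is $-1$ (the unique matching $\{\{a,b\}\}$) for $r\ge 3$; the case $r=2$ is $Y_1Y_2-4$, checked by hand. With this supplied your proof is complete and arguably cleaner than the paper's. By contrast, the one-sentence ``elementary fallback'' you offer is essentially the paper's own argument, but as sketched it does not yet work: the pole order of $y^{\underline{b}}$ at $x_j$ can drop below $b_j$ through cancellation (e.g.\ $y_1y_2=4$ for $r=2$), so the induction needs the careful variable-by-variable extraction that the paper actually performs.
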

\begin{proof}
Suppose that there exists the identity $ S:=\sum_{A\in \Omega_0,
i=1,\cdots r}n(A,i)X_{E(A, i)}=0$ where $\Omega_0$ is a finite
subset of $\Omega$ and $n(A,i)\neq 0\in \bbz$ for $i=1, \cdots, r.$
Note that
$$
X_{E(A, i)}=\prod_{j=i}^{i+r-2}(\frac{x_{j+1}+x_{j-1}}{x_j})^{a_{i,
j-i+1}}
$$
for $i=1, \cdots, r.$ Define a lexical order by set
$x_r<x_1<x_2<\cdots <x_{r-1}$ and $x_i^{a}<x_i^b$ if $a<b$. Set
$l_r(A)=max \{a_{2, r-1}, \cdots, a_{r,1}\}$ and
$l_r=max\{l_r(A)\}_{A\in \Omega_0}$. Then $l_r\neq 0.$ Note that
$a_{i, r-i+1}$ is just the exponent of $X_{E_{r}}$ in the expression
of $X_{E(A, i)}$ for $i=2, \cdots, r.$ Then the expression of $
\sum_{A\in \Omega_0, i=1,\cdots r}n(A,i)X_{E(A, i)}$ contains the
unique part of the form $\frac{L(x_1,\cdots,x_{r-1})}{x^{l_r}_r}$
which has the minimal exponent at $x_r$ and $L(x_1,\cdots,x_{r-1})$
is a Laurent polynomial associated to $x_1,\cdots,x_{r-1}$. In fact,
$\frac{L(x_1,\cdots,x_{r-1})}{x^{l_r}_r}$ is  a part of the sum
 $\sum_{i=2,\cdots r, A\in\Omega_0;
a_{i,r-i+1}=l_r}n(A, i)X_{E(A, i)}$. Note that the terms in this sum
have a common factor $X^{l_r}_{E_{r}},$ thus we have the following
identity
$$\sum_{i=2,\cdots r, A\in\Omega_0;
a_{i,r-i+1}=l_r}n(A, i)X_{E(A, i)}=(\ast)X^{l_r}_{E_{r}}$$ here we
 denote $\frac{1}{X^{l_r}_{E_{r}}}\sum_{i=2,\cdots r, A\in\Omega_0;
a_{i,r-i+1}=l_r}n(A, i)X_{E(A, i)}$ by $(\ast)$.

 Now we
set $l_{r+1}(A)=max \{a_{3, r-1}, \cdots, a_{r,2}\}$ and
$l_{r+1}=max\{l_r(A)\}_{A\in \Omega_0}$. Then $l_{r+1}\neq 0.$ In
the same way as above, we know that the expression of the term
$(\ast)$ contains the unique part of the form
$\frac{L(x_2,\cdots,x_{r-1})}{x^{l_{r+1}}_1}$ which has the minimal
exponent at $x_1=x_{r+1}$ and $L(x_2,\cdots,x_{r-1})$ is a Laurent
polynomial associated to $x_2,\cdots,x_{r-1}$. Note that
$\frac{L(x_2,\cdots,x_{r-1})}{x^{l_{r+1}}_1}X^{l_r}_{E_{r}}$ is
actually a part of the following term
$$\sum_{i=3,\cdots r,
A\in\Omega_0; a_{i,r-i+1}=l_r, a_{i, r-i+2}=l_{r+1}}n(A, i)X_{E(A,
i)}=(\ast\ast)X^{l_{r+1}}_{E_{1}}X^{l_r}_{E_{r}}$$ here  we
 denote $\frac{1}{X^{l_{r+1}}_{E_{1}}X^{l_r}_{E_{r}}}\sum_{i=3,\cdots r,
A\in\Omega_0; a_{i,r-i+1}=l_r, a_{i, r-i+2}=l_{r+1}}n(A, i)X_{E(A,
i)}$ by $(\ast\ast)$. Continue this discussion, we deduce that there
exists some $n(A, i)=0$. It is a contradiction.
\end{proof}
\begin{Thm}\label{theorem2}
The set $\mathcal{B}(Q)$ is a $\mathbb{Z}$-basis of the algebra
$\mathcal{AH}(Q).$
\end{Thm}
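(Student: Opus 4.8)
The plan is to establish that $\mathcal{B}(Q)$ is both a spanning set and a linearly independent set for $\mathcal{AH}(Q)$ over $\mathbb{Z}$. These two halves require quite different arguments, and the linear-independence half is where the real work lies.

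\medskip

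\emph{Spanning.} The algebra $\mathcal{AH}(Q)$ is generated as a $\mathbb{Z}$-algebra by all the generalized cluster variables $X_M$ with $M \in \mathrm{mod}\,kQ$, so every element is a $\mathbb{Z}$-linear combination of products $X_{M_1}\cdots X_{M_t}$. By Proposition \ref{DM}(1) such a product equals $X_{M_1 \oplus \cdots \oplus M_t}$, so it suffices to show each $X_M$ lies in the $\mathbb{Z}$-span of $\mathcal{B}(Q)$. I would proceed by induction on the number of indecomposable summands, using Lemma \ref{7} as the base engine: writing $M = M' \oplus L$ with $L$ indecomposable, the product $X_{M'} X_L$ is by Lemma \ref{7} a $\mathbb{Z}$-linear combination of elements of $\mathcal{B}(Q)$, and $X_{M'}X_L = X_M$. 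Thus $\mathcal{B}(Q)$ spans $\mathcal{AH}(Q)$ over $\mathbb{Z}$.

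\medskip

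\emph{Linear independence.} Here I would reduce to the set $\mathcal{B'}(Q)$ treated in Proposition \ref{theorem1}. The point is to identify, up to reindexing, every exceptional module $R$ with $\mathrm{Ext}^1_{kQ}(R,R)=0$ as a direct sum of the form $E(A,i)$: an exceptional regular module over the cyclic quiver has no self-extensions, which forces its indecomposable summands to have lengths at most $r-1$ and pairwise distinct quasi-socles arranged so that the multiplicity data $(a_{i,1},\dots,a_{i,r-1})$ records exactly a matrix $A \in \Omega$. One must check that this correspondence $R \leftrightarrow (A,i)$ is a bijection between the exceptional modules (equivalently, the elements $X_R$ of $\mathcal{B}(Q)$) and the pairs indexing $\mathcal{B'}(Q)$, after which the linear independence of $\mathcal{B}(Q)$ is \emph{exactly} the statement of Proposition \ref{theorem1}, which I may assume.

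\medskip

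\emph{The main obstacle} is the combinatorial bookkeeping in the linear-independence half: verifying that the condition $\mathrm{Ext}^1_{kQ}(R,R)=0$ on a regular $kQ$-module translates precisely into membership in the parametrization by $\Omega$, and that distinct exceptional modules give distinct Laurent monomials matching distinct $E(A,i)$. Once that dictionary is set up, the lexicographic peeling argument of Proposition \ref{theorem1}—extracting successively the minimal exponents at $x_r$, then $x_1$, and so on—does all the heavy lifting and forces every coefficient $n(A,i)$ to vanish. Combining the spanning statement with this independence yields that $\mathcal{B}(Q)$ is a $\mathbb{Z}$-basis of $\mathcal{AH}(Q)$, completing the proof.
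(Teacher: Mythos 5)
Your spanning argument is fine and is essentially the paper's: Lemma \ref{7} together with Proposition \ref{DM}(1) puts every product of generalized cluster variables, hence every element of $\mathcal{AH}(Q)$, into the $\mathbb{Z}$-span of $\mathcal{B}(Q)$. The gap is in the linear-independence half, where you propose to identify each rigid module $R$ with some $E(A,i)$ and conclude that the independence of $\mathcal{B}(Q)$ is \emph{exactly} Proposition \ref{theorem1}. This identification is false. The module $E(A,i)=E_i^{a_{i,1}}\oplus\cdots\oplus E_{i+r-2}^{a_{i,r-1}}$ is a direct sum of \emph{simple} modules, and such a sum is almost never rigid: already $\mathrm{Ext}^1_{kQ}(E_2,E_1)\neq 0$ because $\mathrm{Hom}_{kQ}(E_1,\tau E_2)=\mathrm{Hom}_{kQ}(E_1,E_1)\neq 0$. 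Conversely, a rigid module such as $E_1[2]$ or $E_1[2]\oplus E_1$ is not semisimple (and its summands need not have distinct quasi-socles, contrary to your parenthetical claim). The two families of Laurent polynomials genuinely differ; for instance $X_{E_1\oplus E_2}=X_{E_1}X_{E_2}=X_{E_1[2]}+1$ by Proposition \ref{DM}(2). So $\mathcal{B}(Q)\neq\mathcal{B}'(Q)$, and Proposition \ref{theorem1} does not apply verbatim to $\mathcal{B}(Q)$.

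What is missing is the triangular change of basis that the paper's proof invokes under the name ``unipotent matrix transformation.'' For each rigid $R$ one chooses the pair $(A,i)$ with $\mathrm{\underline{dim}}\,E(A,i)=\mathrm{\underline{dim}}\,R$ (possible because the dimension vector of a rigid nilpotent module vanishes at some vertex of the cycle, so it is supported on the arc $\{i,\dots,i+r-2\}$ for some $i$); Lemma \ref{6} then gives $X_R=X_{E(A,i)}+\sum a_{R'}X_{R'}$ with the sum over $\mathrm{\underline{dim}}\,R'\prec\mathrm{\underline{dim}}\,R$. This unitriangularity with respect to $\prec$, combined with the linear independence of $\mathcal{B}'(Q)$ from Proposition \ref{theorem1}, is what transfers independence to $\mathcal{B}(Q)$. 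Without this intermediate step your argument does not go through; with it, your outline becomes the paper's proof.
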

\begin{proof}
It is easy to prove that the elements in $\mathcal{B'}(Q)$ and
elements in $\mathcal{B}(Q)$ have a unipotent matrix transformation.
Then by Proposition \ref{theorem1} and Lemma \ref{7}, we know that
$\mathcal{B}(Q)$ is a $\mathbb{Z}$-basis of the algebra
$\mathcal{AH}(Q).$
\end{proof}
\begin{Example}\label{exam}
(1) Consider $r=1$, then we can calculate
$$X_{E_1}=2, X_{E_1[2]}=3, X_{E_1[3]}=4, \cdots, X_{E_1[n]}=n+1,\cdots$$
It is obvious that $\mathcal{B}(Q)=\{1\}$.

(2) Consider $r=2$, then we can calculate
$$X_{E_1}=\frac{2x_2}{x_1}, X_{E_1[2]}=3, \cdots, X_{E_1[2n-1]}=\frac{2nx_2}{x_1}, X_{E_1[2n]}=2n+1,\cdots$$
$$X_{E_2}=\frac{2x_1}{x_2}, X_{E_2[2]}=3, \cdots, X_{E_2[2n-1]}=\frac{2nx_1}{x_2}, X_{E_2[2n]}=2n+1,\cdots$$
It is obvious that $\mathcal{B}(Q)=\{X^{m}_{E_1},X^{n}_{E_2}| m,n\in
\mathbb{Z}_{\geq 0}\}$.
\end{Example}

\section*{Acknowledgements}
The authors are grateful
 to Professor Jie Xiao for helpful discussions.

\end{document}